\DeclareMathOperator{\id}{id}
\DeclareMathOperator{\cano}{cano}
\theoremstyle{definition}
\newtheorem{definition}{Definition}
\newtheorem{proposition}[definition]{Proposition}
\newtheorem{example}[definition]{Example}
\newtheorem{lemma}[definition]{Lemma}
\newtheorem{theorem}[definition]{Theorem}
\newtheorem{corollary}[definition]{Corollary}
\newtheorem{remark}[definition]{Remark}
\author{Jean-Christophe Aval\footnote{Univ. Bordeaux, Bordeaux INP, CNRS, LaBRI, UMR5800, F-33400 Talence, France},\hspace{0.2cm} Théo Karaboghossian\footnotemark[1]\hspace{0.2cm} and\hspace{0.2cm} Adrian Tanasa\footnotemark[1] \footnote{IUF Paris, France, EU} \footnote{H. Hulubei Nat. Inst. Phys. Nucl. Engineering Magurele, Romania, EU} \\
{\small aval@labri.fr, theo.karaboghossian@u-bordeaux.fr, ntanasa@u-bordeaux.fr}}
\title{The Hopf monoid of hypergraphs and its sub-monoids: basic invariant and reciprocity theorem}
\date{\today}
\begin{document}

\maketitle

\begin{abstract}
In arXiv:1709.07504 Aguiar and Ardila give a Hopf monoid structure on hypergraphs as well as a general construction of polynomial invariants on Hopf monoids. Using these results, we define in this paper a new polynomial invariant on hypergraphs. We give a combinatorial interpretation of this invariant on negative integers which leads to a reciprocity theorem on hypergraphs. Finally, we use this invariant to recover well-known invariants on other combinatorial objects (graphs, simplicial complexes, building sets, etc) as well as the associated reciprocity theorems.
\end{abstract}

\section{Introduction}

In combinatorics, Hopf structures give an algebraic framework to deal with operations of merging (product) and splitting (co-product) combinatorial objects. The notion of Hopf algebra is well known and used in combinatorics for over 30 years, and has proved its great strength in answering various questions (see for example \cite{HAC}). More recently, Aguiar and Mahajan defined a notion of Hopf monoid \cite{am2},\cite{am} akin to the notion of Hopf algebra and built on Joyal’s theory of species \cite{joy}.
Such as in the case of Hopf algebras, a useful application of Hopf monoids is to define and compute polynomial invariants (see \cite{ABS}, \cite{BB}, \cite{DKT} or \cite{KMT} for various examples), as was put to light by the recent and extensive paper of Aguiar and Ardila \cite{AA}. In particular they give a theorem to generate various polynomial invariants and use it to recover the chromatic polynomial of graphs, the Billera-Jia-Reiner polynomial of matroids and the strict order polynomial of posets. Furthermore they also give a way to compute these polynomial invariants on negative integers hence also recovering the different reciprocity theorems associated to these combinatorial objects.

In this paper, we apply Aguiar and Ardila's theorem to the Hopf monoid of hypergraphs defined in \cite{AA}. This Hopf structure is different than the one defined and studied  in \cite{HypB} (the respective co-products are different).
We obtain a combinatorial description for the (basic) invariant $\chi_I(H)(n)$ in terms of colorings of hypergraphs (Theorem \ref{chi}). We then use another approach (rather technical) than the method of \cite{AA} to get a reciprocity theorem for hypergraphs (Theorem \ref{chi-n}).
Finally, we use these results to obtain polynomial invariants on sub-monoids of the Hopf monoid of hypergraphs.

This paper is an extended version of the extended abstract for FPSAC 2019 \cite{fpsac}.

\section{Definitions and reminders}
\subsection{Hopf monoids}

We recall here basic definitions on Hopf monoids. The interested reader may refer to \cite{am} and to \cite{AA} for more information on this topic. In this paper, $\Bbbk$ is a field and all vector spaces are over $\Bbbk$.

\begin{definition}
A \textit{vector species} $P$ consists of the following data:
\begin{itemize}
\item for each finite set $I$, a vector space $P[I]$,
\item for each bijection of finite sets $\sigma: I\rightarrow J$, a linear map $P[\sigma]:P[I]\rightarrow P[J]$. These maps should be such that $P[\sigma\circ\tau] = P[\sigma]\circ P[\tau]$ and $P[\id] = \id$.
\end{itemize}

A \textit{sub-species} of a vector species $P$ is a vector species $Q$ such that for each finite set $I$, $Q[I]$ is a sub-space of $P[I]$ and for each bijection of finite sets $\sigma: I\rightarrow J$, $Q[\sigma] = P[\sigma]_{|Q[I]}$.

For $P$ and $Q$ two vector species, a \textit{morphism} $f: P\rightarrow Q$ between $P$ and $Q$ is a collection of linear maps $f_I : P[I] \rightarrow Q[I]$ satisfying the naturality axiom: for each bijection $\sigma: I\rightarrow J$, $f_J\circ P[\sigma] = Q[\sigma]\circ f_I$. 
\end{definition}

\begin{definition}
A \textit{connected Hopf monoid in vector species} is a vector species $M$ with $M[\emptyset] = \Bbbk$ that is equipped with product and co-product linear maps
\begin{displaymath}
\mu_{S,T}: M[S]\otimes M[T] \rightarrow M[S\sqcup T], \qquad \Delta_{S,T}: M[S\sqcup T] \rightarrow M[S]\otimes M[T],
\end{displaymath}
with $S$ and $T$ disjoint sets, and subject to the following axioms.
\begin{itemize}
\item \textit{Naturality}. For each pair of disjoint sets $S$, $T$, each bijection $\sigma$ with domain $S\sqcup T$, we have $M[\sigma]\circ\mu_{S,T} = \mu_{\sigma(S),\sigma(T)}\circ M[\sigma_{|S}]\otimes M[\sigma_{|T}]$ and $M[\sigma_{|S}]\otimes M[\sigma_{|T}]\circ\Delta_{S,T} = \Delta_{\sigma(S),\sigma(T)}\circ M[\sigma]$.
\item \textit{Unitality}. For each set $I$, $\mu_{I,\emptyset}$, $\mu_{\emptyset,I}$, $\Delta_{I,\emptyset}$ and $\Delta_{\emptyset,I}$ are given by the canonical isomorphisms $M[I]\otimes\Bbbk\cong\Bbbk\cong\Bbbk\otimes M[I]$.
\item \textit{Associativity}. For each triplet of pairwise disjoint sets $R$,$S$, $T$, we have: $\mu_{R,S\sqcup T}\circ\id\otimes\mu_{S,T} = \mu_{R\sqcup S, T}\circ \mu_{R,S}\otimes\id$.
\item \textit{Co-associativity}. For each triplet of pairwise disjoint sets $R$,$S$, $T$, we have: $\Delta_{R,S}\otimes\id\circ\Delta_{R\sqcup S, T} = \id\otimes\Delta_{S,T}\circ\Delta_{R,S\sqcup T}$.
\item \textit{Compatibility}. For each pair of disjoint sets $A$, $B$, each pair of disjoint sets $C$, $D$ ,we have the following commutative diagram, where $\tau$ maps $x\otimes y$ to $y\otimes x$:
\begin{center}
\begin{tikzcd}
\\
P[S]\otimes P[T] \arrow[d, "\Delta_{A,B}\otimes\Delta_{C,D}" swap] \arrow[r, "\mu_{S,T}"] & P[I] \arrow[r, "\Delta_{S',T'}"] & P[S']\otimes P[T']  \\
P[A]\otimes P[B]\otimes P[C]\otimes P[D] \arrow[rr, "\id\otimes\tau\otimes\id" swap] & & P[A]\otimes P[C]\otimes P[B]\otimes P[D] \arrow[u, "\mu_{A,C}\otimes\mu_{B,D}" swap]
\\
\end{tikzcd}
\end{center}
\end{itemize}

A \textit{sub-monoid} of a Hopf monoid $M$ is a sub-species of $M$ stable under the product and co-product maps. 

The \textit{co-opposite} Hopf monoid $M^{cop}$ of $M$ is the Hopf monoid with opposite co-product: $\Delta^{M^{cop}}_{S,T}=\Delta^{M}_{T,S}$.

A \textit{morphism of Hopf monoids} in vector species is a morphism of vector species which preserves the products, co-products (compatibility axiom) and the unity (unitality axiom).
\end{definition}

\begin{remark}
Readers more familiar with Hopf algebras can see connected Hopf monoids in vector species as a way to refine the coproduct of connected graded Hopf algebras. In fact there exists a functor $F$ called the \textit{Fock functor} from the category of Hopf monoids into the category of graded Hopf algebras. This functor is such that for a Hopf monoid $M$, the elements of size $n$ of $F(M)$ are the elements of $M[[n]]$ quotiented by the action of the symmetric group: $F(M)_n = M[[n]]_{S_n}$. The coproduct on $F(M)_n$ is then of the form $\Delta_n = f\circ\sum_{S\sqcup T = [n]}\Delta_{S,T}\circ i$ with $i$ and $f$ well chosen maps.
\end{remark}

We will use the term Hopf monoid for connected Hopf monoid in vector species. A sub-monoid of a Hopf monoid $M$ is itself a Hopf monoid when equipped with the product and co-product maps of $M$. We consider this to always be the case.

A \textit{decomposition} of a finite set $I$ is a sequence of pairwise disjoint subsets $S=(S_1,\dots,S_l)$ such that $I=\sqcup_{i=1}^lS_i$. A \textit{composition} of a finite set $I$ is a decomposition of $I$ without empty parts. We will write $S \vdash I$ for $S$ a decomposition of $I$, $S\vDash I$ if $S$ is a composition, $l(S) = l$ the \textit{length} of a decomposition and $|S| = |I|$ the number of elements in the decomposition.

\begin{definition}
Let $M$ a be a Hopf monoid. The \textit{antipode} of $M$ is the collection of maps $\text{S}_I : M[I]\rightarrow M[I]$ given by $\text{S}_{\emptyset} = \id$ and $$ \text{S}_I = \sum_{\substack{(S_1,\dots S_k)\vDash I\\ k\geq 1}}(-1)^{k} \mu_{S_1,\dots,S_k}\circ\Delta_{S_1,\dots S_k},$$ for any non empty finite set $I$.
\end{definition}

This expression of the antipode is known as \textit{Takeuchi's formula}.

\begin{definition}
A \textit{character} on a Hopf monoid $M$ is a collection of linear maps $\zeta_I:M[I]\rightarrow \Bbbk$ subject to the following axioms.
\begin{itemize}
\item \textit{Naturality}. For each bijection $\sigma: I\rightarrow J$, we have $\zeta_J\circ M[\sigma] = \zeta_I$.
\item \textit{Multiplicativity}. For each disjoint sets $S$, $T$, we have $\zeta_{S\sqcup T}\circ\mu_{S,T} = \mu_{\Bbbk}\circ\zeta_S\otimes\zeta_T$.
\item \textit{Unitality}. $\zeta_{\emptyset}(1) = 1$.
\end{itemize}
\end{definition}

Let us recall from \cite{AA} the results which we will use in the sequel.

\begin{definition}
\label{definv}
Let $M$ be a Hopf monoid and $\zeta$ a character on $M$. For $x\in M[I]$ and $n$ an integer, we define:
\begin{displaymath}
\chi_I(x)(n)= \sum_{(S_1, \dots S_n) \vdash I} \zeta_{S_1}\otimes\dots\otimes\zeta_{S_n}\circ\Delta_{S_1,\dots S_n}(x).
\end{displaymath}
\end{definition}

\begin{theorem}[Proposition 16.1 and Proposition 16.2 in \cite{AA}]
\label{inv}
Let $M$ be a Hopf monoid and $\zeta$ a character on $M$ and let $\chi$ be the collection of maps of Definition \ref{definv}. Then $\chi_I(x)$ is a polynomial invariant in $n$ such that:
\begin{enumerate}
\item $\chi_I(x)(1) = \zeta(x)$,
\item $\chi_{\emptyset} = 1$ and $\chi_{S\sqcup T}(\mu(x\otimes y)) = \chi_S(x)\chi_T(y)$,
\item $\chi_I(x)(-n) = \chi_I(\text{S}_I(x))(n)$.
\end{enumerate}
\end{theorem}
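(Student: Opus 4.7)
The plan is to proceed in three steps: (a) show that $\chi_I(x)(n)$ is polynomial in $n$ and derive parts (1)--(2) by direct manipulation, (b) set up the convolution algebra of characters and prove a bivariate semigroup identity, and (c) deduce (3) by identifying $\chi_I(\cdot)(-n)$ as the convolution inverse of $\chi_I(\cdot)(n)$ and matching it with $\chi_I(\text{S}_I(\cdot))(n)$.

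\textbf{Polynomiality and parts (1), (2).} In a decomposition $(S_1,\ldots,S_n) \vdash I$, any empty block contributes trivially, since $\zeta_\emptyset(1) = 1$ and $\Delta$ on an empty part is the canonical isomorphism. Grouping by the underlying composition of length $k$ (with $\binom{n}{k}$ ways to interleave the $n-k$ empty parts) gives
\begin{equation*}
\chi_I(x)(n) = \sum_{k=0}^{|I|}\binom{n}{k} \sum_{(T_1,\ldots,T_k)\vDash I} \zeta_{T_1}\otimes\cdots\otimes\zeta_{T_k}\circ\Delta_{T_1,\ldots,T_k}(x),
\end{equation*}
a polynomial of degree at most $|I|$. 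Setting $n = 1$ leaves only the composition $(I)$, which yields (1). For (2), iterating the compatibility axiom rewrites $\Delta_{U_1,\ldots,U_n}\circ\mu_{S,T}$ as componentwise products of coproducts of $x$ and $y$; the multiplicativity of $\zeta$ then factors the sum over decompositions of $S\sqcup T$ into the product $\chi_S(x)(n)\,\chi_T(y)(n)$.

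\textbf{Convolution and a semigroup identity.} The space of characters of $M$ carries the convolution product $(\alpha * \beta)_I = \sum_{S\sqcup T = I}(\alpha_S\otimes\beta_T)\circ\Delta_{S,T}$, forming a group with identity the counit and inverse $\zeta\mapsto\zeta\circ\text{S}$ (deduced from the antipode axiom, the multiplicativity of $\zeta$, and the commutativity of $\Bbbk$). Coassociativity then gives, for non-negative integers $n,m$,
\begin{equation*}
\chi_I(x)(n+m) = \sum_{A\sqcup B = I}\bigl(\chi_A(\cdot)(n)\otimes\chi_B(\cdot)(m)\bigr)\circ\Delta_{A,B}(x),
\end{equation*}
obtained by splitting a length-$(n+m)$ decomposition into its first $n$ and last $m$ blocks. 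Both sides are polynomial in $(n,m)$, so the identity extends to all integers.

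\textbf{Reciprocity and main obstacle.} Setting $m = -n$ and noting that $\chi_I(x)(0) = 1$ when $I = \emptyset$ and $0$ otherwise (only the empty tuple is a length-zero decomposition), we see that $\chi_I(\cdot)(-n)$ is the convolution inverse of $\chi_I(\cdot)(n) = \zeta^{*n}_I$. Independently, a direct expansion of $\chi_I(\text{S}_I(x))(n)$ uses the anti-coendomorphism identity
\begin{equation*}
\Delta_{S_1,\ldots,S_n}\circ\text{S}_I = \tau_{\mathrm{rev}}\circ(\text{S}_{S_n}\otimes\cdots\otimes\text{S}_{S_1})\circ\Delta_{S_n,\ldots,S_1},
\end{equation*}
established inductively from the $n = 2$ case. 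After applying $\zeta^{\otimes n}$ and reindexing the sum over ordered decompositions, the tensor reversal is absorbed by the commutativity of $\Bbbk$, yielding $\chi_I(\text{S}_I(x))(n) = (\zeta\circ\text{S})^{*n}_I(x)$; uniqueness of convolution inverses then gives $\chi_I(x)(-n) = \chi_I(\text{S}_I(x))(n)$. The anti-coendomorphism bookkeeping together with the careful extension of polynomial identities from $\mathbb{Z}_{\geq 0}^2$ to $\mathbb{Z}^2$ is the only delicate step; everything else reduces to coassociativity, the compatibility axiom, and Takeuchi's formula.
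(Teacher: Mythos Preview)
The paper does not supply its own proof of this theorem; it is quoted verbatim from \cite{AA} (Propositions~16.1 and~16.2) as background material, so there is nothing to compare against on the paper's side.

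Your argument is the standard one and is correct. Polynomiality via grouping length-$n$ decompositions by their underlying composition (contributing a factor $\binom{n}{k}$) is exactly how Aguiar--Ardila prove it; parts~(1) and~(2) follow as you say from unitality, coassociativity, and iterated compatibility plus multiplicativity of $\zeta$. For part~(3), your route through the convolution group of characters and the polynomial extension of the additivity identity $\chi(\cdot)(n+m)=\chi(\cdot)(n)*\chi(\cdot)(m)$ is also the approach of \cite{AA}. One simplification worth noting: once part~(2) is established, $x\mapsto\chi_I(x)(n)$ is itself a character of $M$ for each fixed $n$, and therefore its convolution inverse is automatically $x\mapsto\chi_I(\text{S}_I(x))(n)$. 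This makes the separate expansion via the anti-coendomorphism identity $\Delta_{S_1,\ldots,S_n}\circ\text{S}_I=\tau_{\mathrm{rev}}\circ(\text{S}_{S_n}\otimes\cdots\otimes\text{S}_{S_1})\circ\Delta_{S_n,\ldots,S_1}$ unnecessary for the present purpose, though the identity is of course true and your derivation of $(\zeta\circ\text{S})^{*n}$ from it is valid.
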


Let $M$ be a Hopf monoid. For $I$ a set and $x\in M[I]$ we call $x$ \textit{discrete} if $I=\{i_1,\dots, i_{|I|}\}$ and $x=\mu_{\{i_1\},\dots,\{i_{|I|}\}}x_1\otimes\dots\otimes x_{|I|}$ for $x_j\in M[\{i_j\}]$. Then the maps that send discrete elements onto 1 and other elements onto 0 give us a Hopf monoid character. Following the terminology introduced in Section 17 of \cite{AA}, we call the \textit{basic invariant of $M$} the polynomial invariant of Definition \ref{definv} with this character. We denote $\chi^M$ this polynomial or just $\chi$ when this is clear from the context.

\begin{example}
As shown in Subsection \ref{graphs}, there exists a Hopf monoid structure on graphs whose basic invariant is the chromatic polynomial.
\end{example}

\begin{proposition}[Proposition 16.3 in \cite{AA}]
\label{invmorph}
Let $M$ and $N$ be two Hopf monoids, $\zeta^M$ and $\zeta^N$ characters on $M$ and $N$ and $f: M\rightarrow N$ a Hopf monoid morphism such that for every $I$: $$\zeta^N_I\circ f_I = \zeta^M_I.$$ Denote by $\chi(\zeta^M)$ and $\chi(\zeta^N)$ the polynomial invariants of Definition \ref{definv} with $M$ and $\zeta^M$ and $N$ and $\zeta^N$. For every $I$, one then has: $$ \chi(\zeta^N)_I\circ f_I = \chi(\zeta^M)_I.$$
\end{proposition}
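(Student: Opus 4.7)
The plan is to unfold the definition of $\chi$ on both sides and show the summands match term-by-term, using only that $f$ intertwines the coproducts and that $\zeta^N \circ f = \zeta^M$. Since
$$\chi(\zeta^M)_I(x)(n) = \sum_{(S_1,\dots,S_n)\vdash I} \zeta^M_{S_1}\otimes\cdots\otimes\zeta^M_{S_n}\circ\Delta^M_{S_1,\dots,S_n}(x),$$
and similarly for $N$, it suffices to prove that for every decomposition $(S_1,\dots,S_n)$ of $I$ and every $x \in M[I]$ one has
$$\bigl(\zeta^N_{S_1}\otimes\cdots\otimes\zeta^N_{S_n}\bigr)\circ\Delta^N_{S_1,\dots,S_n}\circ f_I(x) = \bigl(\zeta^M_{S_1}\otimes\cdots\otimes\zeta^M_{S_n}\bigr)\circ\Delta^M_{S_1,\dots,S_n}(x).$$

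First I would establish the multi-ary naturality of $f$ with respect to the coproduct, namely the identity
$$\Delta^N_{S_1,\dots,S_n}\circ f_I = (f_{S_1}\otimes\cdots\otimes f_{S_n})\circ\Delta^M_{S_1,\dots,S_n},$$
where the iterated coproducts $\Delta_{S_1,\dots,S_n}$ are unambiguously defined thanks to co-associativity. This is a routine induction on $n$: the base case $n=2$ is exactly the statement that $f$ is a morphism of Hopf monoids (preservation of the coproduct), and the inductive step applies this binary identity to the factor in position, say, $(S_1\sqcup\cdots\sqcup S_{n-1}, S_n)$ and then the induction hypothesis to the first tensor factor. One must also use naturality with respect to bijections to pick a consistent parenthesization, but co-associativity makes this harmless.

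Once that identity is in hand, I would post-compose both sides with the tensor product of the characters $\zeta^N_{S_i}$ and apply the hypothesis $\zeta^N_{S_i}\circ f_{S_i} = \zeta^M_{S_i}$ on each factor to obtain
$$\bigl(\zeta^N_{S_1}\otimes\cdots\otimes\zeta^N_{S_n}\bigr)\circ(f_{S_1}\otimes\cdots\otimes f_{S_n}) = \zeta^M_{S_1}\otimes\cdots\otimes\zeta^M_{S_n}.$$
Summing the resulting equality over all decompositions $(S_1,\dots,S_n)\vdash I$ yields $\chi(\zeta^N)_I\circ f_I = \chi(\zeta^M)_I$.

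The argument is essentially formal and the only mildly technical step is the inductive extension of the morphism property to $n$-fold coproducts; beyond that, everything reduces to applying the hypothesis on each tensor factor and summing over decompositions, so I do not expect a genuine obstacle.
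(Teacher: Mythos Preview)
Your argument is correct. Note that the paper does not supply its own proof of this proposition; it is quoted as Proposition~16.3 of \cite{AA} and used as a black box, so there is nothing to compare against. Your approach---extending the coproduct-compatibility of $f$ to iterated coproducts by induction on $n$, then applying the hypothesis $\zeta^N_{S_i}\circ f_{S_i}=\zeta^M_{S_i}$ on each tensor factor and summing over decompositions---is the standard and essentially forced argument for a statement of this kind.
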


In particular, since Hopf monoid morphisms conserve discrete elements, for $f: M\rightarrow N$ a Hopf monoid morphism and $I$ a set, we have $\chi^N_I\circ f_I = \chi^M_I$.

\subsection{A useful combinatorial identity}

We recall here a classical result of combinatorics and a direct corollary which will be useful in the following section. We only give a sketch of the proofs.

In all the following, given an integer $n$ we will denote by $[n]$ the set $\{1,\dots, n\}$.

\begin{proposition}
Let $n$ and $m$ be two integers. The number of surjections $S_{n,m}$ from $[m]$ to $[n]$ is given by: $$S_{n,m} = \sum_{k=0}^n(-1)^{n-k}\binom{n}{k}k^m.$$ 
\end{proposition}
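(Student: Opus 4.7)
The plan is to apply inclusion-exclusion to the set of all functions from $[m]$ to $[n]$, filtering out those whose image misses at least one element of the codomain.

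First, I would introduce, for each $i \in [n]$, the set $A_i$ of functions $f:[m]\to[n]$ such that $i \notin f([m])$. A function is a surjection if and only if it belongs to none of the $A_i$, so
\begin{displaymath}
S_{n,m} = n^m - \left|\bigcup_{i=1}^n A_i\right|.
\end{displaymath}
Next I would apply the standard inclusion-exclusion formula to this union. The key observation is that for any subset $J \subseteq [n]$, the intersection $\bigcap_{i \in J} A_i$ consists exactly of the functions from $[m]$ to $[n]\setminus J$, which has cardinality $(n-|J|)^m$. Grouping by $j = |J|$, I get
\begin{displaymath}
S_{n,m} = \sum_{j=0}^n (-1)^j \binom{n}{j}(n-j)^m,
\end{displaymath}
and after the substitution $k = n-j$ this becomes exactly the claimed formula.

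Since the paper only asks for a sketch, I would stop there. There is no real obstacle: the only thing to double-check is that the $j=0$ term gives $n^m$, corresponding to the total number of functions, so that the inclusion-exclusion expansion is compatible with the initial formulation $S_{n,m} = n^m - |\cup A_i|$. If a purely algebraic proof were desired instead, one could alternatively start from the identity $n^m = \sum_{k=0}^n \binom{n}{k} S_{k,m}$ (obtained by partitioning all functions according to the size of their image) and apply binomial Möbius inversion, but the inclusion-exclusion route is the most direct and is the standard sketch in the literature.
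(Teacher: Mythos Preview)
Your proposal is correct and follows exactly the approach indicated in the paper, which simply states that the formula is obtained by the inclusion-exclusion principle. Your write-up just makes explicit the details the paper leaves implicit.
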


\begin{proof}
This formula can be obtained by the inclusion-exclusion principle.
\end{proof}

\begin{corollary}
\label{idcombi}
For $n$ and $m$ two integers such that $m<n$, and $P$ a polynomial of degree at most $m$, we have: $$\sum_{k=0}^n(-1)^{n-k}\binom{n}{k}P(k) = 0.$$
\end{corollary}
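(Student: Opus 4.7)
The plan is to derive the corollary directly from the proposition by combining two observations: that the proposition's formula evaluates to zero for each monomial $k^j$ with $j<n$, and that the expression in the corollary is linear in $P$.

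First I would note the trivial combinatorial fact that if $m<n$, then there are no surjections from $[m]$ to $[n]$ (the codomain is strictly larger than the domain), so $S_{n,m}=0$. Substituting into the formula of the proposition yields
\begin{equation*}
\sum_{k=0}^n (-1)^{n-k}\binom{n}{k} k^m = 0 \qquad \text{whenever } m<n.
\end{equation*}
Since the hypothesis $m<n$ is what matters, I would apply this identity not only for the exponent $m$ but for every exponent $j$ with $0\le j\le m$ (all of which still satisfy $j<n$), obtaining
\begin{equation*}
\sum_{k=0}^n (-1)^{n-k}\binom{n}{k} k^j = 0 \qquad \text{for all } 0\le j\le m.
\end{equation*}
One should briefly check the boundary case $j=0$ separately, where $k^0=1$ and the identity reduces to the classical $\sum_{k=0}^n(-1)^{n-k}\binom{n}{k}=0$ (valid since $n\ge 1$, as $n>m\ge 0$).

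Finally, I would write the polynomial $P$ of degree at most $m$ in the monomial basis, $P(k)=\sum_{j=0}^m a_j k^j$, and use linearity of the sum:
\begin{equation*}
\sum_{k=0}^n (-1)^{n-k}\binom{n}{k} P(k) = \sum_{j=0}^m a_j \sum_{k=0}^n (-1)^{n-k}\binom{n}{k} k^j = 0.
\end{equation*}
There is no real obstacle here: the only subtlety is recognizing that the proposition must be invoked for every exponent $j\le m$ and not just for $j=m$, and that the resulting finite-difference operator annihilates every polynomial of degree below $n$.
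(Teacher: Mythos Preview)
Your proof is correct and follows essentially the same approach as the paper's own (one-line) proof, which simply observes that the corollary is a direct consequence of $S_{n,m}=0$ when $m<n$; you have merely spelled out the linearity argument that the paper leaves implicit.
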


\begin{proof}
The statement above is a direct consequence of the fact that $S_{n,m} = 0$ for $n<m$.
\end{proof}

\section{Basic invariant of hypergraphs}

In all of the following, $I$ always denotes a finite set.

Our goal is to express the basic invariant of the Hopf monoid of hypergraphs defined in Section 20 of \cite{AA}. More specifically we intend to obtain a combinatorial interpretation of $\chi_I(x)(n)$ and $\chi_I(x)(-n)$.

In this context, a \textit{hypergraph over $I$} is a collection of (possibly repeated) subsets of $I$, which we call \textit{edges}\footnote{in some references, the terms \textit{hyperedge} or \textit{multiedge} is used.}. The elements of $I$ are then called \textit{vertices} of $H$ and $HG[I]$ denotes the free vector space of hypergraphs over $I$. Note that two hypergraphs over different sets can never be equal, e.g $\{\{1,2,3\},\{2,3,4\}\}\in HG[[4]]$ is not the same as $\{\{1,2,3\},\{2,3,4\}\}\in HG[[4]\cup\{a,b\}]$. This is illustrated in Figure \ref{same-edges}.

\begin{figure}[htbp]
\begin{center}
\includegraphics[scale=1.5]{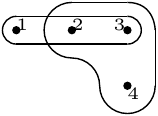}
\hspace{5cm}
\includegraphics[scale=1.5]{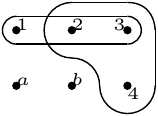}
\caption{Two hypergraphs with the same edges but over different sets.}
\label{same-edges}
\end{center}
\end{figure}

The product and co-product for $I=S\sqcup T$ are given by,
\begin{align*}
\mu_{S,T}: HG[S]\otimes HG[T] &\rightarrow HG[I] & \Delta_{S,T}: HG[I] &\rightarrow HG[S]\otimes HG[T] \\
H_1\otimes H_2 &\mapsto H_1\sqcup H_2 & H &\mapsto H_{|S}\otimes H_{/S}
\end{align*}
where $H_{|S} = \{e\in H\, |\, e\subseteq S\}$ is the \textit{restriction of $H$ to $S$} and $H_{/S} = \{e\cap T\, |\, e \nsubseteq S\}$ is the \textit{contraction of $S$ from $H$}. The discrete hypergraphs are then the hypergraphs with edges of cardinality at most 1.

\begin{example}
For $I=[5]$, $S=\{1,2,5\}$ and $T=\{3,4\}$, we have:
\begin{center}
\begin{tikzcd}[column sep=huge]
\raisebox{-21pt}{\includegraphics[scale=1.5]{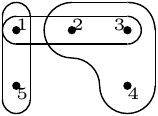}} \arrow[r, "\Delta_{S,T}"] & \raisebox{-10pt}{\includegraphics[scale=1.5]{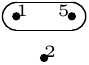}}\hspace{0.2cm} \otimes \hspace{0.2cm}\raisebox{-10pt}{\includegraphics[scale=1.5]{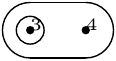}}
\end{tikzcd}
\end{center}
\end{example}


In \cite{AA}, Aguiar and Ardila propose a method to obtain a combinatorial interpretation of any polynomial invariant given in Definition \ref{definv} on negative integers, assuming that we have an interpretation of it on positive integers. Their method consists in using a cancellation-free grouping-free formula for the antipode and point 3 of Theorem \ref{inv}.
Here we use a different approach: we express the polynomial dependency of $\chi_I(x)(n)$ in $n$, which we then use to calculate $\chi_I(x)(-n)$ and interpret the resulting formula.

Let us begin by giving a proposition which is needed to show the polynomial dependency of $\chi_I(x)(n)$ in $n$.
For $t\in\mathbb{N}^*$ and a sequence of positive integers $p_1, p_2,\dots, p_t$, we define $F_{p_1,\dots, p_t}$ as a function over the integers given by, for $n\in\mathbb{N}$:$$F_{p_1,\dots, p_t}(n) = \sum_{0\leq k_1 <\dots < k_t\leq n-1} k_1^{p_1}\cdots k_t^{p_t}.$$
Note that if $t>n$, then $F_{p_1,\dots, p_t}(n) = \sum_{\emptyset}\dots=0$.

\begin{proposition}
Let $p_1, p_2,\dots, p_t$ be integers and define $d_k = \sum_{i=1}^k p_i + k$ for $1\leq k \leq t$. Then $F_{p_1,\dots, p_t}$ is a polynomial of degree $d_t$ whose constant coefficient is null and the $(d_t - i)$-th, (for $i < d_t$) coefficient is given by
\begin{displaymath}
\sum_{j_{t-1} = 0}^{\min(j_t, d_{t-1}-1)}\sum_{j_{t-2}}^{\min(j_{t-1}, d_{t-2}-1)}\dots\sum_{j_1 = 0}^{\min(j_2,d_1 - 1)}\prod_{k=1}^t \binom{d_k - j_{k-1}}{j_k - j_{k-1}}\frac{B_{j_k - j_{k-1}}}{d_k - j_{k-1}},
\end{displaymath}
where $j_t = i$ and $j_0 = 0$, and the $B_j$ numbers are the Bernoulli numbers with the convention $B_1 = -1/2$.
\end{proposition}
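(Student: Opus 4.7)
The plan is to induct on $t$, invoking Faulhaber's formula
\[
\sum_{k=0}^{n-1} k^q \;=\; \frac{1}{q+1}\sum_{j=0}^{q}\binom{q+1}{j} B_j\, n^{q+1-j},
\]
valid with the convention $B_1 = -1/2$, which already shows that a power sum is a polynomial in $n$ of degree $q+1$ with vanishing constant term. The base case $t = 1$ is then immediate: $F_{p_1}(n)$ is exactly such a power sum, and the claimed nested formula collapses, upon setting $j_0 = 0$ and $j_1 = i$, to the single factor $\binom{d_1}{i} B_i / d_1$, which is what Faulhaber gives.

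For the inductive step, I would exploit the recursive identity
\[
F_{p_1,\dots,p_t}(n) \;=\; \sum_{k=0}^{n-1} k^{p_t}\, F_{p_1,\dots,p_{t-1}}(k).
\]
By the induction hypothesis, $F_{p_1,\dots,p_{t-1}}(k) = \sum_{j_{t-1}=0}^{d_{t-1}-1} c^{(t-1)}_{j_{t-1}}\, k^{d_{t-1} - j_{t-1}}$; multiplying by $k^{p_t}$ and summing each resulting monomial via Faulhaber raises the degree by one, producing a polynomial in $n$ of degree $d_{t-1} + p_t + 1 = d_t$ whose constant term vanishes (since $F_{p_1,\dots,p_t}(0) = 0$). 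Matching the coefficient of $n^{d_t - j_t}$ forces the Bernoulli index in Faulhaber to be $j_t - j_{t-1}$, giving the one-step recurrence
\[
c^{(t)}_{j_t} \;=\; \sum_{j_{t-1}=0}^{\min(j_t,\, d_{t-1}-1)} c^{(t-1)}_{j_{t-1}} \binom{d_t - j_{t-1}}{j_t - j_{t-1}} \frac{B_{j_t - j_{t-1}}}{d_t - j_{t-1}}.
\]
Unrolling this across the $t$ stages reproduces the nested sum in the statement.

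The main obstacle is bookkeeping the index ranges. The upper bound $\min(j_k,\, d_{k-1}-1)$ at each level must be shown to arise from two independent conditions: the nonnegativity of the Bernoulli index ($j_k \ge j_{k-1}$), and the zero-constant-term property of the inner polynomial $F_{p_1,\dots,p_{k-1}}$, which restricts $j_{k-1}$ to $\{0,\dots,d_{k-1}-1\}$. One should also verify that the Faulhaber cutoff $j_t - j_{t-1} \le d_t - 1 - j_{t-1}$ is automatically respected whenever $j_t \le d_t - 1$, so that no Bernoulli number is ever evaluated outside its admissible range. Once these index constraints are established, the remainder of the argument is a routine matching of exponents and binomial factors.
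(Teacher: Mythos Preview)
Your proposal is correct and follows essentially the same approach as the paper: induction on $t$, with Faulhaber's formula providing both the base case and the one-step recursion $F_{p_1,\dots,p_t}(n)=\sum_{k=0}^{n-1}k^{p_t}F_{p_1,\dots,p_{t-1}}(k)$, after which matching coefficients yields the nested sum. If anything, your explicit discussion of the index-range constraints (the $\min(j_k,d_{k-1}-1)$ cutoffs and the Faulhaber upper bound) is more careful than the paper, which leaves this bookkeeping implicit in the chain of equalities.
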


\begin{proof}
We show this by induction on $t$. For $t=1$ the expression of the coefficients gives us the well-known identity $F_p(n) = \sum_{i=0}^p \binom{p+1}{i}\frac{B_i}{p+1}n^{p+1-i}$. Hence the result is true for $t=1$. Suppose now the result is true for $t\geq 1$ and let $p_1, p_2,\dots, p_{t+1}$ be $t+1$ integers. Denote by $a_i$ the $d_t -i$ coefficient of $F_{p_1,\dots, p_t}(n)$. We then have:
\begin{align*}
F_{p_1,\dots, p_{t+1}}(n) &= \sum_{0\leq k_1 <\dots < k_{t+1}\leq n-1} k_1^{p_1}\cdots k_{t+1}^{p_{t+1}} = \sum_{k_{t+1}=0}^{n-1}k_{t+1}^{p_{t+1}}\sum_{0\leq k_1 <\dots < k_t\leq k_{t+1}-1}  k_1^{p_1}\cdots k_t^{p_t}\\
&= \sum_{k = 0}^{n-1} k^{p_{t+1}}F_{p_1,\dots, p_t}(k) \\
&= \sum_{k = 0}^{n-1} k^{p_{t+1}}\sum_{j = 0}^{d_t -1} a_j k^{d_t-j} \\
&= \sum_{j = 0}^{d_t - 1} a_j\sum_{k = 0}^{n-1} k^{p_{t+1} + d_t -j} = \sum_{j = 0}^{d_t - 1} a_j\sum_{k = 0}^{n-1} k^{d_{t+1} -1 - j} \\
&= \sum_{j = 0}^{d_t - 1} a_j F_{d_{t+1} - 1 - j}(n) \\
&= \sum_{j = 0}^{d_t - 1} a_j\sum_{i = 0}^{d_{t+1}-1-j}\binom{d_{t+1}-j}{i}\frac{B_i}{d_{t+1} - j} n^{d_{t+1} - j -i} \\
&= \sum_{j = 0}^{d_t - 1}\sum_{i = 0}^{d_{t+1}-1-j} a_j\binom{d_{t+1}-j}{i}\frac{B_i}{d_{t+1} - j} n^{d_{t+1} - j -i} \\
&= \sum_{j = 0}^{d_t - 1}\sum_{i = j}^{d_{t+1}-1} a_j\binom{d_{t+1}-j}{i-j}\frac{B_{i-j}}{d_{t+1} - j} n^{d_{t+1} - i} \\
&= \sum_{i = 0}^{d_{t+1} - 1}\left(\sum_{j = 0}^{\min(i,d_t - 1)} a_j\binom{d_{t+1}-j}{i-j}\frac{B_{i-j}}{d_{t+1}-j}\right) n^{d_{t+1} - i}.
\end{align*} 
This concludes this proof.
\end{proof}

Before stating our results on $\chi_I(H)(n)$ we need to introduce some definitions. There exists a canonical bijection between decompositions and functions with co-domain of the form $[n]$. In the sequel, we will want to seamlessly pass from one notion to the other. We hence give a few explanations on this bijection. Given an integer $n$, the canonical bijection between decompositions of $I$ of size $n$ and functions from $I$ to $[n]$ is given by:
\begin{align*}
b_{I,n}: \{f:I\rightarrow [n]\} &\rightarrow \{P\vdash I\,|\, l(P)=n\} \\
f &\mapsto (f^{-1}(1),\dots,f^{-1}(n)).
\end{align*}
If it is clear from the context what are $I$ and $n$, we will write $b$ instead of $b_{I,n}$. If $P$ is a partition we will also refer to $b^{-1}(P)$ by $P$ so that instead of writing ``i such that $v\in P_i$'' and ``i and j such that $v\in P_i$, $v'\in P_j$ and $i<j$ '' we can just write $P(v)$ and $P(v)<P(v')$. Similarly, if $P$ is a function we will refer to $b(P)$ by $P$ so that $P_i = P^{-1}(i)$. Also remark that $b_{I,n}$ induces a bijection between compositions of $I$ of size $n$ and surjections from $I$ to $[n]$.

\begin{definition}
Let $H$ be a hypergraph over $I$ and $n$ be an integer. A \textit{coloring of $H$ with $[n]$} is a function from $I$ to $[n]$ (or a decomposition of $I$ of length $n$ from what precedes this) and in this context the elements of $[n]$ are called \textit{colors}.

Let $S\vdash I$ be a coloring of $H$. For $v\in e\in H$, we say that $v$ is a \textit{maximal vertex of $e$ (for $S$)} if $v$ is of maximal color in $e$ and we call the \textit{maximal color of $e$ (for $S$)} the color of a maximal vertex of $e$. We say that a vertex $v$ is a \textit{maximal vertex (for $S$)} if it is a maximal vertex of an edge.

If $J\subseteq I$ is a subset of vertices, the \textit{order of appearance of $J$ (for $S$)} is the composition $\cano(S_{|J})$ where $S_{|J} = (S_1\cap J,\dots, S_{l(S)}\cap J)$. The map $\cano$ sends any decomposition to the composition obtained by dropping the empty parts.

\end{definition}

\begin{example}
\label{hgcolo}
We represent the coloring of a hypergraph on $I=\{a,b,c,d,e,f\}$ with $\{${\color{DarkGreen}1$(\bullet)$},{\color{DarkBlue}2$(\times)$},{\color{DarkMagenta}3$(\square)$},{\color{DarkRed}4$(\blacksquare)$}$\}$:
\begin{center}
\includegraphics[scale=1.6]{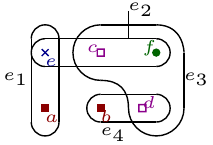}
\end{center}
The maximal vertex of $e_1$ is $a$ and the maximal vertices of $e_3$ are $c$ and $d$. The maximal color of $e_2$ is {\color{DarkMagenta}{3}}. The order of appearance of $\{a,c,d,e\}$ is $(\{e\},\{c,d\},\{a\})$.
\end{example}

\begin{definition}
Let $H$ be a hypergraph over $I$. An \textit{orientation} of $H$ is a function $f$ from $H$ to $I$ such that $f(e)\in e$ for every edge $e$. A \textit{directed cycle} in an orientation $f$ of $H$ is a sequence of distinct edges $e_1,\dots, e_k$ such that $f(e_1)\in e_2\setminus f(e_2),\dots, f(e_k)\in e_1\setminus f(e_1)$. An orientation is \textit{acyclic} if it does not have any directed cycle. Let $\mathcal{A}_H$ be the set of acyclic orientations of $H$.

An orientation $f$ of $H$ and a coloring $S$ of $H$ with $[n]$ are said to be \textit{compatible} if $S(f(e)) = \max(S(e))$ for every $e\in H$. They are said to be \textit{strictly compatible} if $f(e)$ is the unique maximal vertex of $e$.
\end{definition}

\begin{example}
The coloring given in Example \ref{hgcolo} has two compatible acyclic orientations: both send $e_1$ on $a$, $e_2$ on $c$ and $e_4$ on $b$, but one sends $e_3$ on $c$ and the other $e_3$ on $d$.

For the color set $\{${\color{DarkBlue}1$(\bullet)$},{\color{DarkRed}2$(\times)$}$\}$, the following coloring has 4 compatible orientations but only two are acyclic.
\begin{center}
\includegraphics[scale=1.6]{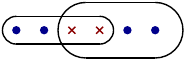}
\end{center}
\end{example}

\begin{theorem}
\label{chi}
Let $I$ be a set and $H\in HG[I]$ a hypergraph over $I$. Then $\chi_I(H)(n)$ is the number of colorings of $H$ with $[n]$ such that every edge has only one maximal vertex. This is also the number of strictly compatible pairs of acyclic orientations and colorings with $[n]$.
Furthermore, defining $P_{H,f} = \{ P \vDash f(H)\, |\,  v\in e\setminus f(e) \Rightarrow P(v)< P(f(e))\}$, for every $f\in \mathcal{A}_H$, 
we have that $$\chi_I(H)(n) = n^{|J_H|}\sum_{f\in \mathcal{A}_H}\sum_{P\in P_{H,f}} F_{p_1,\dots, p_{l(P)}}(n),$$ where $J_H\subseteq I$ is the set of isolated vertices of $H$ (i.e vertices not in an edge) and for every $P\in P_{H,f}$, $p_i =|\tilde{P}_i|$ and $\tilde{P}_i = \left(\bigcup_{e\in f^{-1}(P_i)}e\right) \cap f(H)^c\bigcap_{j<i} \tilde{P}_j^c$.
\end{theorem}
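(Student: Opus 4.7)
The plan is to unfold the definition of $\chi_I(H)(n)$ with the basic character and extract combinatorial content. Starting from
$$\chi_I(H)(n) = \sum_{(S_1,\dots,S_n)\vdash I} \zeta_{S_1}\otimes\cdots\otimes\zeta_{S_n}\circ\Delta_{S_1,\dots,S_n}(H),$$
only the decompositions for which every tensor factor of the iterated coproduct is discrete contribute, since $\zeta$ is the basic character. Unrolling the coproduct, the $i$-th factor lives on $S_i$ and its edges are $e\cap S_i$ for $e\in H$ with $e\subseteq S_1\cup\cdots\cup S_i$ but $e\not\subseteq S_1\cup\cdots\cup S_{i-1}$. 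This factor is discrete exactly when $|e\cap S_i|\le 1$ for each such edge. Passing to the coloring $c=b_{I,n}^{-1}(S_1,\dots,S_n)$, this translates precisely to: every edge of $H$ has a unique vertex of maximum color. This establishes the first combinatorial description.

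For the second description, I would send each such coloring $c$ to the pair $(f_c,c)$, where $f_c(e)$ is the unique maximal vertex of $e$. The pair is strictly compatible by construction, and conversely every strictly compatible pair $(f,c)$ arises this way. It only remains to show that $f_c$ is acyclic: a directed cycle $e_1,\dots,e_k$ with $f_c(e_i)\in e_{i+1}\setminus f_c(e_{i+1})$ would force $c(f_c(e_i))<c(f_c(e_{i+1}))$ strictly (since $f_c(e_{i+1})$ is the \emph{unique} maximum of $e_{i+1}$), chaining to a cyclic strict inequality, a contradiction.

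For the polynomial formula, I would partition the set of strictly compatible pairs by $f\in\mathcal{A}_H$ and then by the order of appearance $P$ of $f(H)$ under the coloring. The strict-compatibility condition on vertices of $f(H)$ matches exactly the condition defining $P_{H,f}$, so $P$ ranges over $P_{H,f}$ and corresponds to a strictly increasing assignment $0\le k_1<\cdots<k_{l(P)}\le n-1$ of colors to the blocks $P_1,\dots,P_{l(P)}$. Isolated vertices contribute a free factor $n^{|J_H|}$. For a non-isolated, non-$f$-image vertex $v$, strict compatibility demands $c(v)<c(f(e))$ for every edge $e\ni v$; the tightest constraint is the smallest $k_i$ among such edges, which by construction is the unique $i$ with $v\in\tilde P_i$. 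Hence each $v\in\tilde P_i$ admits $k_i$ colors, the $\tilde P_i$ partition $I\setminus(J_H\cup f(H))$, and the contribution for fixed $f$ and $P$ is
$$\sum_{0\le k_1<\cdots<k_{l(P)}\le n-1} \prod_{i=1}^{l(P)} k_i^{p_i} \;=\; F_{p_1,\dots,p_{l(P)}}(n),$$
yielding the claimed expression after summing over $f$ and $P$.

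The main technical point is in the third step: one must check that the constraints on vertices of $W=I\setminus(J_H\cup f(H))$ decouple and each reduces to a single strict inequality $c(v)<k_i$. This hinges on the fact that any constraint $c(v)<k_j$ with $j>i$ is automatically implied by $c(v)<k_i$, which is precisely what the iterated removal $\bigcap_{j<i}\tilde P_j^c$ in the definition of $\tilde P_i$ records. Once this observation is in place, the product-over-vertices factorization and the identification with $F_{p_1,\dots,p_{l(P)}}(n)$ are immediate.
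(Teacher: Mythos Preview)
Your proposal is correct and follows essentially the same approach as the paper's own proof: both unfold the iterated coproduct to identify the contributing decompositions as colorings with a unique maximal vertex per edge, pass to strictly compatible pairs via $c\mapsto(f_c,c)$, and then stratify by $f\in\mathcal{A}_H$ and by the order of appearance $P$ of $f(H)$ to obtain the $F_{p_1,\dots,p_{l(P)}}(n)$ formula. The paper writes out the bijection and its inverse explicitly, whereas you argue more conversationally, but the content is the same; your observation that the iterated complements in $\tilde P_i$ record the \emph{tightest} constraint on each non-image vertex is exactly the point the paper uses to verify that its map lands in $\prod_i [k_i]^{\tilde P_i}$.
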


\begin{proof}
For $S$ a decomposition of $I$ of size $n$, let $H_1\in HG[S_1],\dots,H_n\in HG[S_n]$ be hypergraphs such that $H_1\otimes\dots\otimes H_n = \Delta_{S_1,\dots, S_n}(H)$. Let $S$ be a decomposition of $I$ of size $n$. Let $e$ be an edge. We then have the equivalence:
\begin{align*}
e\in H_i &\iff e\cap S_i \not = \emptyset \wedge\forall j>i, e\cap S_j = \emptyset \\
&\iff \text{$e\cap S_i$ is the set of maximal vertices of $e$}
\end{align*}
Hence, we have that 
\begin{align*}
\zeta_{S_1}\otimes\dots\otimes\zeta_{S_n}\circ\Delta_{S_1,\dots, S_n}(H) = 1 &\iff \forall e\in H, e\in H_i \Rightarrow |e\cap S_i| = 1\\
&\iff\text{each edge has only one maximal vertex.}
\end{align*}
The equivalence between the colorings such that every edge has only one maximal vertex and the strictly compatible pairs of acyclic orientations and colorings is given by the bijection $S\mapsto (e\mapsto v_e,S)$, where $v_e$ is the unique vertex in $e$ such that $S(v_e) = \max(S(e))$.

The term $n^{|J_H|}$ in the formula is trivially obtained, in the following we hence consider that $H$ has no isolated vertices.

Informally, the formula can be obtained by the following reasoning. To choose a coloring such that every edge has only one maximal vertex, one can proceed in the following way:
\begin{enumerate}
\item choose the maximal vertex of each edge ($f\in \mathcal{A}_H$),
\item choose in which order those vertices appear ($P\in P_{H,f}$),
\item choose the color of those vertices ($k_1+1,\dots,k_{l(P)}+1$), (and notice that the set of such choices is empty if $l(P)> n$, which allows us not to add this non polynomial dependency in $n$ at the previous choice),
\item choose the colors of the yet uncolored vertices which are in the same edge than a vertex of minimal color in $f(H)$ ($k_1^{|\tilde{P}_1|}$); then those in the same edge than a vertex of second minimal color in $f(H)$ ($k_2^{|\tilde{P}_2|}$), etc.
\end{enumerate}

More formally, we show that there exists a bijection between the set of colorings such that every edge has only one maximal vertex and the set $$\bigsqcup_{f\in \mathcal{A}_H}\bigsqcup_{P\in P_{H,f}}\bigsqcup_{0\leq k_1 < k_2 < \dots < k_{l(P)}\leq n-1} \prod_{1\leq i \leq l(P)}[k_i]^{\tilde{P}_i},$$ where $[k_i]^{\tilde{P}_i}$ is the set of maps from $\tilde{P}_i$ to $[k_i]$. Let $g$ be a coloring of interest and define:
\begin{itemize}
\item $f:e \mapsto v\in e$ such that $g(v) = \max(g(e))$,
\item $P = h\circ g(f(H))$ where $h$ is the increasing bijection from $g(f(H))$ to $[|f(H)|]$,
\item $\tilde{P}_i = \left(\bigcup_{e\in f^{-1}(P_i)}e\right) \cap f(H)^c\bigcap_{j<i} \tilde{P}_j^c$ for $1\leq i\leq l(P)$,
\item $k_i = g(P_i) - 1$ for $1\leq i \leq l(P)$.
\end{itemize}
The function $f$ not being in $\mathcal{A}_H$ would imply that there exists a vertex $v$ such that $g(v)<g(v)$. This is not possible, hence $f\in \mathcal{A}_H$. We also have that $P\in P_{H,f}$ because by definition of $g$, $v\in e\setminus f(e)$ implies $g(v) < g(f(e))$ and $h$ is increasing. It is also clear that $0\leq k_1< \dots < k_{l(P)} \leq n-1$. The image of $g$ is then $(g_{|\tilde{P}_1},\dots,g_{|\tilde{P}_{l(P)}})$ which is in $\prod_{1\leq i \leq l(P)}[k_i]^{\tilde{P}_i}$ since for every $v\in \tilde{P}_i$ we must have $g(v) < g(P_i)$ by definition. Let us now consider $f\in \mathcal{A}_H$, $P\in P_{H,f}$, $1\leq k_1 < \dots < k_{l(P)}$ and $(g_1,\dots g_{l(P)})\in \prod_{1\leq i \leq l(P)}[k_i]^{\tilde{P}_i}$. Let $h$ be the increasing bijection from $[l(P)]$ to $\{k_1+1,\dots, k_{l(P)}+1\}$ and define $g: I \rightarrow [n]$ by $g_{|\tilde{P}_i} = g_i$ and $g_{|f(H)} = h\circ P$ (it is sufficient since $(\tilde{P}_1,\dots \tilde{P}_{l(P)},f(H))$ is a partition of $I$). Let us show that $g$ is a coloring of interest. Let be $v\in e\setminus f(e)$,
\begin{itemize}
\item if $v\in f(H)$ then $P(v)<P(f(e)$) by definition and so $g(v) < g(f(e))$, since $h$ is increasing, 
\item if $v\not\in f(H)$ then $v\in \tilde{P}_i$ with $i\leq P(f(e))$ and so $g(v) = g_i(v) \leq k_i < k_i + 1 \leq k_{P(f(e))} + 1 = g(f(e))$.
\end{itemize}
We conclude the proof by remarking that the two defined transformations are inverse functions.
\end{proof}

\begin{example}
The coloring given in Example \ref{hgcolo} is not counted in $\chi_I(H)(4)$ since $e_3$ has two maximal vertices. However by changing the color of $d$ to {\color{DarkBlue}2} we do obtain a coloring where every edge has only one maximal vertex.

Let $H$ be the hypergraph $\{\{1,2,3\},\{2,3,4\}\}\in HG[[4]]$ represented in Figure \ref{same-edges}. Then we have $\chi_{[4]}(H)(n)= n^4 - \frac{8}{3}n^3+\frac{5}{2}n^2-\frac{5}{6}n$ and we do verify that, for example, $\chi_{[4]}(H)(2)=3$.
\end{example}

We are now interested in the value of $(-1)^{|I|}\chi_I(H)(-n)$. Let us first state two lemmas. The first lemma justifies the use of the $F_p$ (with $p$ a finite sequence of integers) polynomials to express the basic invariant: they have a good expression on negative integers. The second lemma is a result which can be interpreted on graphs and partitions as we do, but also on posets and linear extensions. It is the crux of the proof of Theorem \ref{chi-n}.

Given an integer $n$, a sequence of positive integers $p=p_1,\dots, p_t$ is said to be a \textit{decomposition} of $n$ if $n=\sum_{i=1}^t p_i$. We denote this by $p\vdash n$. If $p=(p_{1,1},\dots,p_{1,k_1},p_{2,1},\dots,p_{2,k_2},\dots,p_{l,k_l})$ and $q=(q_1,\dots, q_l)$ are two decompositions of the same integer, we say that $q$ \textit{coarsens} $p$ or $p$ \textit{refines} $q$ and write $p\prec q$ if $(p_{i,1},\dots,p_{i,k_i})$ is a decomposition of $q_i$ for $1\leq i\leq l$.

\begin{example}
The sequences $p_1=(4,2,3,3)$, $p_2=(3,1,2,2,1,1,1,1)$ and $p_3 = (4,3,1,1,3)$ are three decompositions of $12$ such that $p_2$ refines $p_1$ and there is no relation of refinement and coarsening between $p_3$ and the two other sequences.
\end{example}

\begin{lemma}
\label{f-n}
Let $p$ be a sequence of positive integers of length $t$. Then $$F_{p}(-n) = (-1)^{d_t}\sum_{p\prec q} F_q(n+1).$$
\end{lemma}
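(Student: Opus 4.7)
\medskip

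\noindent\textbf{Proof plan.} I would prove the identity by induction on the length $t$ of the sequence $p$, using the recursion
\[
F_{p_1,\dots,p_t}(n)=\sum_{k=0}^{n-1}k^{p_t}\,F_{p_1,\dots,p_{t-1}}(k)
\]
together with the standard ``polynomial reflection'' principle: if $G$ is the unique polynomial with $G(0)=0$ and $G(n)-G(n-1)=f(n-1)$ for some polynomial $f$, then as a polynomial identity one has
\[
G(-n)=-\sum_{j=1}^{n}f(-j).
\]
For the base case $t=1$, the formula $F_{p_1}(-n)=(-1)^{p_1+1}\sum_{j=1}^{n}j^{p_1}=(-1)^{d_1}F_{p_1}(n+1)$ follows from this reflection, and the only coarsening of $(p_1)$ is itself, so the claim holds.

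For the inductive step, I would apply the reflection to $G=F_{p_1,\dots,p_t}$ with $f(k)=k^{p_t}F_{p_1,\dots,p_{t-1}}(k)$, obtaining
\[
F_{p_1,\dots,p_t}(-n)=(-1)^{p_t+1}\sum_{j=1}^{n}j^{p_t}\,F_{p_1,\dots,p_{t-1}}(-j).
\]
Invoking the induction hypothesis on $F_{p_1,\dots,p_{t-1}}(-j)$ and using $d_{t-1}+p_t+1=d_t$ gives
\[
F_{p_1,\dots,p_t}(-n)=(-1)^{d_t}\!\!\sum_{q\succ(p_1,\dots,p_{t-1})}\sum_{j=1}^{n}j^{p_t}\,F_{q}(j+1).
\]
Now I would expand $F_{q_1,\dots,q_l}(j+1)$ as $\sum_{0\le i_1<\dots<i_l\le j}i_1^{q_1}\cdots i_l^{q_l}$, swap the order of summation (including $j=0$ harmlessly since $0^{p_t}=0$), and split the range according to whether $i_l<j$ or $i_l=j$. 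The first case contributes $F_{q_1,\dots,q_l,p_t}(n+1)$ and the second $F_{q_1,\dots,q_{l-1},q_l+p_t}(n+1)$, yielding
\[
\sum_{j=1}^{n}j^{p_t}\,F_{q}(j+1)=F_{q_1,\dots,q_l,p_t}(n+1)+F_{q_1,\dots,q_{l-1},q_l+p_t}(n+1).
\]

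The final step, which I expect to be the real combinatorial content, is the identification
\[
\sum_{q\succ(p_1,\dots,p_{t-1})}\!\bigl[F_{q,p_t}(n+1)+F_{q_1,\dots,q_{l-1},q_l+p_t}(n+1)\bigr]=\sum_{q'\succ p}F_{q'}(n+1).
\]
For this I would set up an explicit bijection: any coarsening $q'$ of $p$ is determined by its last block, which pools the $s$ final parts $p_{t-s+1}+\cdots+p_t$ of $p$ for some $s\ge 1$; dropping this block leaves a coarsening of $(p_1,\dots,p_{t-s})$. The case $s=1$ corresponds to the term $F_{q,p_t}$ (where $q\succ(p_1,\dots,p_{t-1})$ is arbitrary), while the cases $s\ge 2$ correspond bijectively to terms $F_{q_1,\dots,q_{l-1},q_l+p_t}$ by matching the last block of $q'$ with $q_l+p_t$, so that $q_l=p_{t-s+1}+\cdots+p_{t-1}$ and the preceding parts of $q'$ form a coarsening of $(p_1,\dots,p_{t-s})$. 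Verifying that this is indeed a bijection onto the set of coarsenings of $p$ will be the main bookkeeping step; once this is established, the identity follows and the induction closes.
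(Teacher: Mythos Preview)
Your argument is correct and proceeds, like the paper's, by induction on $t$ via the recursion $F_{p_1,\dots,p_t}(n)=\sum_{k=0}^{n-1}k^{p_t}F_{p_1,\dots,p_{t-1}}(k)$. The differences are in the implementation rather than the strategy. The paper opens by rewriting the right-hand side as the weakly increasing sum $\sum_{0\le k_1\le\cdots\le k_t\le n}k_1^{p_1}\cdots k_t^{p_t}$; this reformulation makes the inductive step a one-line computation and removes the need for your final bijection between coarsenings of $p$ and pairs $(q,\text{append/merge})$ with $q\succ(p_1,\dots,p_{t-1})$. Conversely, the paper's base case is done via the explicit Bernoulli expansion of $F_{p_1}$ (using that $B_i=0$ for odd $i>1$), and its inductive step detours through the coefficient formula $F_{p_1,\dots,p_{t+1}}(n)=\sum_j a_j F_{d_{t+1}-1-j}(n)$ from the preceding proposition before applying the $t=1$ case termwise; your reflection principle handles both steps more cleanly and makes the argument self-contained. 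In short: the paper trades a little work at the start (the weakly-increasing reformulation) to avoid your closing bijection, while you trade a little bookkeeping at the end to avoid the paper's Bernoulli computation and reliance on the previous proposition.
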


\begin{proof}
Remark that $\sum_{p\prec q} F_q(n+1)$ can also be written as $\sum_{0\leq k_1 \leq\dots\leq k_t\leq n}k_1^{p_1}\cdots k_t^{p_t}$. We now proceed by induction on $t$. For $t=1$, we have 
\begin{align*}
F_p(-n) &= \sum_{i=0}^p \binom{p+1}{i}\frac{B_i}{p+1}(-n)^{p+1-i} \\
&= \frac{(-1)^{p+1}}{p+1}n^{p+1} -\frac{1}{2}(-1)^pn^p + (-1)^{p+1}\sum_{i=2}^p \binom{p+1}{i}\frac{B_i}{p+1}n^{p+1-i} \\
&= (-1)^{p+1}\left(\frac{1}{p+1}n^{p+1} +\frac{1}{2}n^p + \sum_{i=2}^p \binom{p+1}{i}\frac{B_i}{p+1}n^{p+1-i}\right) \\
&= (-1)^{p+1}(F_p(n) + n^p) = (-1)^{p+1}F_p(n+1),
\end{align*}
where the second equality comes from the fact that $B_i =0$ when $i$ is an odd number different from 1. Suppose now our proposition is true up to $t$. In the proof of Proposition 2 we showed that $F_{p_1,\dots, p_{t+1}}(n) = \sum_{j = 0}^{d_t - 1} a_j F_{d_{t+1} - 1 - j}(n)$ where $a_j$ is the $d_t-j$ coefficient of $F_{p_1,\dots, p_{t}}(n)$. This gives
\begin{align*}
F_{p_1,\dots, p_{t+1}}(-n) &= \sum_{j = 0}^{d_t - 1} a_j (-1)^{d_{t+1} - j}\sum_{k=0}^{n}k^{d_{t+1} - 1 - j} = -\sum_{j = 0}^{d_t - 1} a_j\sum_{k = 0}^{n} (-k)^{p_{t+1} + d_t -j} \\
&= -\sum_{k = 0}^{n} (-k)^{p_{t+1}}\sum_{j = 0}^{d_t -1} a_j (-k)^{d_t-j} = (-1)^{p_{t+1}+1}\sum_{k = 0}^n k^{p_{t+1}} F_{p_1,\dots,p_t}(-k) \\
&= (-1)^{p_{t+1}+1}\sum_{k_{t+1} = 0}^n k_{t+1}^{p_{t+1}} (-1)^{d_t}\sum_{0\leq k_1 \leq\dots\leq k_t\leq k_{t+1}}k_1^{p_1}\cdots k_t^{p_t} \\
&= (-1)^{d_{t+1}}\sum_{0\leq k_1 \leq\dots\leq k_{t+1} \leq n}k_1^{p_1}\cdots k_{t+1}^{p_{t+1}} \\
&= (-1)^{d_{t+1}}\sum_{p\prec q} F_q(n+1),
\end{align*}
where the fifth equality is our induction hypothesis.
\end{proof}


\begin{definition}
Let $I$ and $J$ be two disjoint sets and $P = (P_1,\dots,P_l)\vDash I$ and $Q = (Q_1,\dots, Q_k) \vDash J$ be two compositions. The \textit{product} of $P$ and $Q$ is the composition $P\cdot Q = (P_1,\dots,P_l,Q_1,\dots Q_k)$. The \textit{shuffle product} of $P$ and $Q$ is the set $sh(P,Q) = \{R\vDash I\sqcup J\,|\, P=\cano (R_{|I}), Q=\cano(R_{|J})\}$.

Let $P'=(P_{1,1},\dots,P_{1,k_1},P_{2,1},\dots,P_{2,k_2},\dots,P_{l,k_l})$ be another composition of $I$. We say that $P'$ \textit{refines} $P$ and write $P'\prec P$ if $P_i = \bigcup_{j=1}^{k_i}P_{i,j}$ for $1\leq i\leq l$.
\end{definition}

Recall that compositions can be seen as surjections and that for a decomposition $P\vDash I$ and an element $v\in I$, we denote by $P(v)$ the index $i$ such that $v\in P_i$.

\begin{lemma}
\label{compsum}
Let $I$ be a set and $P\vDash I$ a composition of $I$. We have the identity: $$\sum_{Q\prec P} (-1)^{l(Q)} = (-1)^{|P|}.$$

Furthermore let $G$ be a directed acyclic graph on $I$ and consider the \textit{constrained set} \linebreak[4] $C(G,P)=\{Q\prec P\,|\, \forall (v,v')\in G,Q(v)< Q(v')\}$. We have the more general identity: $$\sum_{Q\in C(G,P)} (-1)^{l(Q)} = \left\{\begin{array}{cl}
0  & \text{if there exists $(v,v')\in G$ such that $P(v')<P(v)$},    \\ 
(-1)^{|P|}  & \text{if not.}\end{array}\right.$$
\end{lemma}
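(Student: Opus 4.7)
The plan is to prove the two identities in sequence, deducing the first from a classical identity on ordered set partitions and the second by isolating the constraints of $G$ block-by-block and then reducing to an inductive argument on a sink of the DAG.

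For the first identity, I would observe that refinements $Q\prec P=(P_1,\dots,P_l)$ are in bijection with tuples $(R_1,\dots,R_l)$ where each $R_i$ is a composition of $P_i$, and that $l(Q)=\sum_i l(R_i)$. Hence the sum factorises as
\[
\sum_{Q\prec P}(-1)^{l(Q)}=\prod_{i=1}^{l}\sum_{R\vDash P_i}(-1)^{l(R)}.
\]
Each inner factor equals $\sum_{k\geq 1}(-1)^{k}k!\,S(|P_i|,k)$, where $S(n,k)$ denotes the Stirling number of the second kind, and this is the evaluation at $x=-1$ of the classical identity $x^{n}=\sum_{k}k!\,S(n,k)\binom{x}{k}$, giving $(-1)^{|P_i|}$. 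Taking the product yields $(-1)^{|I|}=(-1)^{|P|}$.

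For the second identity, I would first classify each edge $(v,v')\in G$ by the positions of its endpoints in $P$. If $P(v)<P(v')$, then $Q(v)<Q(v')$ holds for every $Q\prec P$; if $P(v)>P(v')$ it holds for none, so $C(G,P)=\emptyset$ and the sum is zero; and if $P(v)=P(v')=i$, the condition becomes a constraint internal to the refinement of the block $P_i$ by the induced subgraph $G_i$ on $P_i$. In the non-vanishing case the factorisation from the first part survives with each $R_i$ restricted to be $G_i$-compatible, so it is enough to prove that for any DAG $H$ on any finite set $S$,
\[
\sum_{R}(-1)^{l(R)}=(-1)^{|S|},
\]
the sum running over $H$-compatible compositions $R$ of $S$; multiplying these factors over the blocks of $P$ then yields $(-1)^{|P|}$.

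To prove this DAG generalisation I would induct on $|S|$ (the case $|S|=0$ being trivial). Pick a sink $v$ of $H$, set $H':=H\setminus v$, and group $H$-compatible compositions $R$ of $S$ by the $H'$-compatible composition $R'$ of $S\setminus v$ obtained by deleting $v$ (and dropping an empty block if one appears). For a fixed $R'$ with $k$ parts, let $j$ be the largest index of a block of $R'$ that contains an in-neighbour of $v$ in $H$, with $j=0$ if $v$ has no in-neighbour. The admissible lifts to $R$ are exactly the $k+1-j$ singleton insertions at a gap of index $\geq j$, each with $l(R)=k+1$, and the $k-j$ mergers into an existing block of index $\geq j+1$, each with $l(R)=k$. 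Their total signed contribution to $\sum_{R} (-1)^{l(R)}$ is
\[
(k+1-j)(-1)^{k+1}+(k-j)(-1)^{k}=-(-1)^{k}=-(-1)^{l(R')},
\]
which is independent of $j$. Summing over $R'$ and applying the induction hypothesis yields $-(-1)^{|S|-1}=(-1)^{|S|}$. The $j$-independence of the fibre contribution is the only delicate point; once that cancellation is observed the rest is bookkeeping, which is why I expect this to be the main obstacle.
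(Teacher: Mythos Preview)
Your argument is correct. For the first identity, both you and the paper factor over the blocks of $P$ and reduce to $\sum_{R\vDash S}(-1)^{l(R)}=(-1)^{|S|}$; the paper computes this via the explicit surjection-count formula and a binomial cancellation (their Corollary~\ref{idcombi}), while you invoke the equivalent Stirling identity $x^{n}=\sum_{k}k!\,S(n,k)\binom{x}{k}$ at $x=-1$. These are the same fact in different clothing.

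For the second identity, however, your route is genuinely different from the paper's. After the same block-wise factorisation, the paper first proves that $S(G)=\sum_{R\in C(G)}(-1)^{l(R)}$ is multiplicative under disjoint union via a nontrivial shuffle computation, and then, for connected $G$, inducts on the number of \emph{edges}: it removes superfluous edges, and for a non-superfluous edge $(v,v')$ uses an inclusion--exclusion splitting $C(G\setminus(v,v'))=C(G)\sqcup C(t_{(v,v')}(G))\sqcup\{P:\,P(v)=P(v')\}$ together with an edge-contraction step, yielding the invariance $S(G)=(-1)^{|I|}\iff S(t_{(v,v')}(G))=(-1)^{|I|}$, which it then exploits by flipping edges until a superfluous one appears. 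Your argument instead inducts on the number of \emph{vertices} by deleting a sink, and the key fibre computation $(k+1-j)(-1)^{k+1}+(k-j)(-1)^{k}=-(-1)^{k}$ shows directly that the signed count over lifts is independent of $j$; this bypasses both the shuffle multiplicativity and the edge-flipping entirely. Your approach is shorter and more elementary; the paper's approach, on the other hand, makes visible the connection with linear-extension-type arguments (the flip $t_{(v,v')}$ and the contraction $P(v)=P(v')$ are the standard moves there), which may be of independent interest.
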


\begin{proof}
Since $\sum_{Q\prec P} (-1)^{l(Q)} = \prod_{i=1}^{l(P)}\sum_{Q\vDash P_i} (-1)^{l(Q)}$ we only need to show that $\sum_{Q\vDash I} (-1)^{l(Q)} = (-1)^{|I|}$ to prove the first identity. Since the compositions of $I$ of size $n$ and the surjections from $I$ to $[n]$ are in bijection, we have that:
\begin{align*}
\sum_{Q\vDash I} (-1)^{l(Q)} &= \sum_{n=1}^{|I|} (-1)^n S_{|I|,n} = \sum_{n=1}^{|I|} (-1)^n \sum_{k=1}^n (-1)^{n-k}\binom{n}{k}k^{|I|} \\
&= \sum_{k=1}^{|I|}(-1)^k\left(\sum_{n=k}^{|I|}\binom{n}{k}\right)k^{|I|} = \sum_{k=1}^{|I|}(-1)^k\binom{|I|+1}{k+1}k^{|I|} \\
&= (-1)^{|I|}\sum_{k=0}^{|I|-1}(-1)^{k}\binom{|I|+1}{k}(|I|-k)^{|I|} \\
&= (-1)^{|I|}(1 + \sum_{k=0}^{|I|+1}(-1)^{k}\binom{|I|+1}{k}(|I|-k)^{|I|}) \\
&= (-1)^{|I|}.
\end{align*}
Note that the last equality is a direct consequence of Corollary \ref{idcombi}.

To show the second identity first remark that the case where the sum is null is straightforward: if there exists $(v,v')\in G$ such that $P(v')<P(v)$, then $C(G,P) = \emptyset$ and so the sum is null. From now on we only consider non empty summation sets. In this case we have that $\sum_{Q\in C(G,P)} (-1)^{l(Q)} = \prod_{i=1}^{l(P)}\sum_{Q\in C(G\cap {P_i}^2,(P_i))} (-1)^{l(Q)}$ and we only need to show that \linebreak[4] $\sum_{P\in C(G)} (-1)^{l(P)} = (-1)^{|I|}$ where $C(G)=C(G,(I))$. Let $S(G)$ denote the sum $\sum_{P\in C(G)} (-1)^{l(P)}$ from now on.

If $G$ is not connected let $I=J\sqcup K$ and $G=H\sqcup H'$ where $V(H) = J$ and $V(H') = K$. Let $P\in C(H)$ and $Q\in C(H')$ and suppose without loss of generality that $m = l(Q) < l(P) = M$. To choose $R$ in $sh(P,Q)$ we can first choose its length; then which indices are going to have a part of $Q$; and then which indices among them are also going to have a part of $P$. This leads to:
\begin{align*}
\sum_{R\in sh(P,Q)} (-1)^{l(R)} &= \sum_{k=M}^{m+M}(-1)^{k}\binom{k}{m}\binom{m}{M-(k-m)} \\
&= \sum_{k=0}^{m} (-1)^{k+M}\binom{M+k}{m}\binom{m}{k-m} \\
&= (-1)^{M}\sum_{k=0}^{m} (-1)^{m-k}\binom{M+m-k}{m}\binom{m}{k} \\
&= \frac{(-1)^{M}}{m!}\sum_{k=0}^{m} (-1)^{m-k}\binom{m}{k}\frac{(m+M-k)!}{(M-k)!} \\
&= \frac{(-1)^{M}}{m!}\sum_{k=0}^{m} (-1)^{m-k}\binom{m}{k}(-k)^m \\
&= \frac{(-1)^{m+M}}{m!}\sum_{k=0}^{m} (-1)^{m-k}\binom{m}{k}k^m \\
&= \frac{(-1)^{m+M}}{m!}S_{m,m} = (-1)^{m+M} = (-1)^{l(P)+l(Q)},
\end{align*}
where the fifth equality follows from Corollary \ref{idcombi}. This shows that $S(G)$ is multiplicative (with the product being the disjoint union) and so we can restrict ourselves to showing that $S(G) = (-1)^{|I|}$ for $G$ a connected graph. We will do this by induction on the number of edges of $G$.

Suppose now that $G$ is connected. If $G$ has no edges then $G$ is reduced to a single vertex and the result is trivial. Thus let be $(v,v')\in G$. We say that $(v,v')$ is superfluous if there exists $v_0$, $v_1$, ..., $v_{k+1}\in I$ such that $v=v_0$, $v'=v_{k+1}$ and $(v_i,v_{i+1})\in G$ for all $i\in [k]$. If $(v,v')$ is superfluous then $C(G) = C(G(v,v'))$ and so $S(G) = S(G\setminus(v,v')) = (-1)^{|I|}$ by induction. Otherwise we have $C(G\setminus(v,v')) = C(G) + C(t_{(v,v')}(G)) + C(G\setminus(v,v'))\cap\{P\vDash I\,|\, P(v) = P(v')\}$, where $t_{(v,v')}$ sends $G$ on $G\setminus(v,v')\cup(v',v)$. By induction, we know that $S(G\setminus(v,v')) = (-1)^{|I|}$ and since $C(G\setminus(v,v'))\cap\{P\vDash I\,|\, P(v) = P(v')\} = C\left(G\cap(I/v')^2\cup\bigcup_{(w,v')\in G\setminus(v,v')} (w,v)\cup\bigcup_{(v',w)\in G} (v,w)\right)$, we also have by induction that $\sum_{P\in C(G\setminus(v,v'))\cap\{P\vDash I\,|\, P(v) = P(v')\}} (-1)^{l(P)} = (-1)^{|I|-1}$. Hence, we have the equivalence $S(G) = (-1)^{|I|} \iff S(t_{(v,v')}(G)) = (-1)^{|I|}$.

Let $e_1, \dots, e_k$ be a sequence of edges such that for every $i$, $G_i = t_{e_i}\circ\dots\circ t_{e_1}(G)$ does not have a directed cycle. Then we have that $S(G) = (-1)^{|I|}$ if and only if $S(G_k) = (-1)^{|I|}$. If $G$ has a cycle then we can find a sequence such that $G_k$ has a superfluous edge and hence $S(G_k) = (-1)^{|I|}$. If $G$ does not have any cycle then every sequence of edges satisfies the condition ``$G_i$ does not have a directed cycle'' and so $S(G) = (-1)^{I}$ as long as there exists a directed graph $G'$ with the same underlying non-oriented graph than $G$ such that $S(G') = (-1)^{|I|}$. Given a non-oriented connected graph $H$, we can always find a directed graph $G$ on it with only one vertex $v$ such that for every $w\in V(G)$, $(w,v)\not \in G$. Then we have that $C(G) = (\{v\})\cdot C(G\cap (V(G)-v)^2)$ which gives us $S(G) = -S(G\cap (V(G)-v)^2) = (-1)^{|V(G)|}$ by induction. This concludes the proof.
\end{proof}

We can now state the main result of this section which is a direct generalization to hypergraphs of the reciprocity theorem of Stanley on graphs \cite{stan}.

\begin{theorem}[Reciprocity theorem on hypergraphs]
\label{chi-n}
Let $I$ be a set and $H\in HG[I]$ a hypergraph over $I$. Then $(-1)^{|I|}\chi_I(H)(-n)$ is the number of compatible pairs of acyclic orientations and colorings with $[n]$ of $H$. In particular, $(-1)^{|I|}\chi_I(H)(-1) = |\mathcal{A}_H|$ is the number of acyclic orientations of $H$.
\end{theorem}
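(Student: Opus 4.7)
The plan is to combine the polynomial formula of Theorem \ref{chi} with the reciprocity identity of Lemma \ref{f-n} to compute $\chi_I(H)(-n)$ explicitly, and then use Lemma \ref{compsum} to collapse the resulting inclusion-exclusion sum into a direct count of compatible pairs.

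Substituting $-n$ in the formula of Theorem \ref{chi} and rewriting each $F_{p_1,\ldots,p_{l(P)}}(-n)$ via Lemma \ref{f-n}, after tracking the sign exponent $d_t=|I|-|J_H|-|f(H)|+l(P)$, I obtain
\begin{align*}
(-1)^{|I|}\chi_I(H)(-n) = n^{|J_H|} \sum_{f\in\mathcal{A}_H} (-1)^{|f(H)|} \sum_{P\in P_{H,f}} (-1)^{l(P)} \sum_{0\le k_1\le\cdots\le k_{l(P)}\le n} k_1^{p_1}\cdots k_{l(P)}^{p_{l(P)}}.
\end{align*}
A tuple $(f,P,k_\bullet,g_\bullet)$ with $g_i\in[k_i]^{\tilde P_i}$ encodes a coloring of $I$ exactly as in the proof of Theorem \ref{chi}: vertices of $P_i$ receive color $k_i$, vertices of $\tilde P_i$ are colored by $g_i$, and isolated vertices are colored arbitrarily in $[n]$. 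The defining condition of $P_{H,f}$ together with weak monotonicity of the $k_i$ guarantees that $(f,g)$ is a compatible pair.

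The crux is to invert this encoding. For a fixed compatible pair $(f,g)$, let $C_1,\ldots,C_s$ denote the color classes of $g$ restricted to $f(H)$, listed in increasing color order. The compositions $P\in P_{H,f}$ producing $(f,g)$ are precisely the products $P=Q_1\cdot Q_2\cdots Q_s$ where each $Q_j$ is a composition of $C_j$ satisfying $Q_j(v)<Q_j(f(e))$ whenever $f(e)\in C_j$ and $v\in C_j\cap(e\setminus\{f(e)\})$; the data $k_\bullet,g_\bullet$ are then uniquely determined by $g$. Thus the contribution of $(f,g)$ to the sum factors as
\begin{align*}
(-1)^{|f(H)|}\prod_{j=1}^{s}\sum_{Q_j\in C(D_j,(C_j))}(-1)^{l(Q_j)},
\end{align*}
where $D_j$ is the directed graph on $C_j$ with an arc $(v,f(e))$ for each such edge-vertex pair.

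The main obstacle is showing that each $D_j$ is acyclic, so that Lemma \ref{compsum} applies with $P=(C_j)$ and yields $(-1)^{|C_j|}$. A directed cycle $v_1\to\cdots\to v_k\to v_1$ in $D_j$ would lift to distinct edges $e_1,\ldots,e_k\in H$ with $f(e_i)=v_{i+1}$ and $v_i\in e_i$, forming a directed cycle of $f$ in the hypergraph sense since $f(e_i)=v_{i+1}\in e_{i+1}\setminus\{v_{i+2}\}=e_{i+1}\setminus f(e_{i+1})$, contradicting $f\in\mathcal{A}_H$. The factors therefore multiply to $(-1)^{|f(H)|}$, which cancels the outer $(-1)^{|f(H)|}$, so each compatible pair contributes exactly $+1$, with the factor $n^{|J_H|}$ accounting for arbitrary colorings of isolated vertices. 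The second claim follows from the first by specializing to $n=1$, since then the unique constant coloring is compatible with every acyclic orientation.
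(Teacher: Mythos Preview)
Your proof is correct and follows the same strategy as the paper: apply Lemma~\ref{f-n} to the formula of Theorem~\ref{chi}, then use Lemma~\ref{compsum} to collapse the signed sum over $P\in P_{H,f}$. The only difference is in bookkeeping---the paper first passes to coarsenings $Q\succ P$ via the bijection $\phi$, swaps the order of summation, and applies Lemma~\ref{compsum} to $C(G,Q)$ globally on $f(H)$, whereas you group directly by the encoded pair $(f,g)$ and apply Lemma~\ref{compsum} to each color class $C_j$ separately; these are equivalent reorganizations of the same double sum.
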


\begin{proof}
From Proposition \ref{chi} and Lemma \ref{f-n} we have that $$\chi_I(H)(-n) = (-n)^{|J_H|}\sum_{f\in \mathcal{A}_H}\sum_{P\in P_{H,f}}(-1)^{\sum_{i=1}^{l(P)}p_i + l(P)}\sum_{(p_1, \dots, p_{l(P)}) \prec q}F_q(n+1).$$
Remark that:
\begin{itemize}
\item $\sum_{i=1}^{l(P)}p_i = |I\setminus J_H| -|f(H)|$ (since $(\tilde{P}_1,\dots \tilde{P}_{l(P)}, f(H))$ is a partition of $I\setminus J_H$),
\item $\phi :\{Q\vDash f(H)\,|\, P\prec Q\} \rightarrow \{q\vdash(|I\setminus J_H| -|f(H)|)\,|\, (p_1,\dots, p_{l(P)}) \prec q\}$ $Q\mapsto (|\tilde{Q}_1|,\dots, |\tilde{Q}_{l(Q)}|)$ is a bijection (where $\tilde{Q}_i$ is defined in the same way that $\tilde{P}_i$ in Theorem \ref{chi}).
\end{itemize}
This leads to:
\begin{align*}
(-1)^{|I|}\chi_I(H)(-n) &= n^{|J_H|}\sum_{f\in \mathcal{A}_H}(-1)^{|f(H)|}\sum_{P\in P_{H,f}}(-1)^{l(P)}\sum_{P\prec Q} F_{\phi(Q)}(n+1)\\
&= n^{|J_H|}\sum_{f\in \mathcal{A}_H}(-1)^{|f(H)|}\sum_{Q\vDash f(H)}\left(\sum_{\substack{P\prec Q\\P\in P_{H,f}}} (-1)^{l(P)}\right)F_{\phi(Q)}(n+1).
\end{align*}
By definition of $\mathcal{A}_H$, $G = \{(v,f(e))\,|\, v\in e\setminus f(e)\}$ is a directed acyclic graph on $f(H)$. Hence, remarking that $\{ P\prec Q\,|\, P\in P_{H,f}\} = C(G,Q)$, Lemma \ref{compsum} leads to:
\begin{align*}
(-1)^{|I|}\chi_I(H)(-n) &= n^{|J_H|}\sum_{f\in \mathcal{A}_H}(-1)^{|f(H)|}\sum_{\substack{P\vDash f(H)\\ P(v)\leq P(v') \forall(v,v')\in G}} (-1)^{|f(H)|}F_{\phi(P)}(n+1) \\
&= n^{|J_H|}\sum_{f\in \mathcal{A}_H}\sum_{P\in P_{H,f}'} F_{\phi(P)}(n+1) \\
&= n^{|J_H|}\sum_{f\in \mathcal{A}_H}\sum_{P\in P_{H,f}'}F_{p_1,\dots, p_{l(P)}}(n+1),
\end{align*}
where $P_{H,f}' = \{ P\vDash f(H)\,|\, P(v\in e\setminus f(e) \leq P(f(e)\}$.
To conclude, we now need to show that the set of compatible pairs (acyclic orientation, coloring with $n$) is in bijection with $$\bigsqcup_{f\in \mathcal{A}_H}\bigsqcup_{P\in P_{H,f}'}\bigsqcup_{0\leq k_1 < \dots < k_{l(P)}\leq n}[k_1]^{Q_1}\times\dots\times [k_{l(P)}]^{Q_{l(P)}}.$$ This can be done in a way analogous to the one used in the proof of Theorem \ref{chi}, the only difference being that we choose (with the same terms used in the proof) $g(P_i) = k_i$ instead of $g(P_i) = k_i+1$.
\end{proof}

\begin{example}
For any $I$, any $H\in HG[I]$ and $n$ a positive integer, we have $\chi_I(H)(n) \leq (-1)^{|I|}\chi_I(H)(-n)$. This comes from the fact that any strictly compatible pair is compatible. This is observed for $H=\{\{1,2,3\},\{2,3,4\}\}\in HG[[4]]$: $$\chi_{[4]}(H)(n)= n^4 - \frac{8}{3}n^3+\frac{5}{2}n^2-\frac{5}{6}n < n^4 + \frac{8}{3}n^3+\frac{5}{2}n^2+\frac{5}{6}n = (-1)^4\chi_{[4]}(H)(-n).$$
We also verify that $H$ does have $\chi_{[4]}(H)(-1) = 7$ acyclic orientations ($3\times 3$ orientations minus the two cyclic orientations).
\end{example}

\section{Application to other Hopf monoids}

In this section we use Theorem \ref{chi} and Theorem \ref{chi-n} to obtain a combinatorial interpretation of the basic invariants for the Hopf monoids presented in Sections 20 to 25 of \cite{AA}.

The general method to do this will be to use the fact that these Hopf monoids can be seen as sub-monoids of the Hopf monoid of simple hypergraphs, and then present an interpretation of what is an acyclic orientation on these particular Hopf monoids.

The result from subsection \ref{sshg} is new. The result from subsection \ref{graphs} already appears in \cite{AA}. The results of subsections \ref{sc} to \ref{paths} are new, but they can be derived from more general results in previous papers (details are provided at the beginning of each subsection). Note that however, we present here a uniform approach to obtain these results.

In all the following, we denote by $\chi$ the basic invariant of the Hopf monoid of hypergraphs.

\subsection{Simple hypergraphs}
\label{sshg}
A hypergraph is \textit{simple} if it has no repeated edges. The vector species $SHG$ of simple hypergraphs is not stable by the contraction defined on hypergraphs but it still admits a Hopf monoid structure. The product and co-product are given by, for $I=S\sqcup T$:
\begin{align*}
\mu_{S,T}: SHG[S]\otimes SHG[T] &\rightarrow SHG[I] & \Delta_{S,T}: SHG[I] &\rightarrow SHG[S]\otimes SHG[T] \\
H_1\otimes H_2 &\mapsto H_1\sqcup H_2 & H &\mapsto H_{|S}\otimes H_{/S},
\end{align*}
where $H_{|S} = \{e\in H\, |\, e\subseteq S\}$ and $H_{/S} = \{e\cap T\, |\, e \nsubseteq S\}\cup\{\emptyset\}$ but this time without repetition, i.e $H_{/S}$ can also be defined as $\{B\subseteq\,|\, \exists A\subseteq S, A\sqcup B \in H\}$. A discrete simple hypergraph is then a simple hypergraph with edges of cardinality at most one.

\begin{proposition}
\label{shg}
$\chi^{SHG}$ is the restriction of $\chi$ to the vector species of simple hypergraphs.
\end{proposition}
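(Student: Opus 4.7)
The species $SHG$ sits inside $HG$ as a subspecies, but the coproducts differ on simple hypergraphs: contractions in $SHG$ discard duplicate edges and append an empty edge whenever the restriction is non-empty. Consequently $SHG$ is not a sub-monoid of $HG$ and Proposition \ref{invmorph} does not apply directly. My plan is to prove the equality of $\chi^{SHG}_I(H)$ and $\chi_I(H)$ term by term in the defining sum of Definition \ref{definv}.

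I would first describe the iterated coproduct $\Delta^{SHG}_{S_1,\ldots,S_n}(H)$ explicitly. For a simple $H$ and a decomposition $(S_1,\ldots,S_n)\vdash I$, write $m_S(e)$ for the largest index $j$ with $e\cap S_j\neq\emptyset$. Using coassociativity together with the clean formula $H_{/S}^{SHG}=\{B\subseteq T : \exists A\subseteq S,\ A\sqcup B\in H\}$, a straightforward induction on $n$ shows that the $i$-th tensor factor of $\Delta^{SHG}_{S_1,\ldots,S_n}(H)$ is
$$P_i \;=\; \{e\cap S_i : e\in H,\ m_S(e)=i\}\;\cup\;E_i,$$
where $E_i=\{\emptyset\}$ if some edge $e\in H$ satisfies $m_S(e)<i$, and $E_i=\emptyset$ otherwise. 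For comparison, the proof of Theorem \ref{chi} already identifies the $i$-th piece of $\Delta^{HG}_{S_1,\ldots,S_n}(H)$ as the multiset $\{e\cap S_i : e\in H,\ m_S(e)=i\}$.

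Next, $\zeta^{SHG}_{S_i}(P_i)=1$ if and only if every edge of $P_i$ has cardinality at most one; since the possibly present empty edge in $E_i$ has cardinality zero, this is equivalent to demanding that $|e\cap S_i|=1$ for every $e\in H$ with $m_S(e)=i$. This is precisely the condition for $\zeta_{S_i}$ applied to the corresponding $i$-th piece of $\Delta^{HG}_{S_1,\ldots,S_n}(H)$ to equal $1$. Hence the two summands agree for every decomposition, and summing over decompositions of length $n$ gives $\chi^{SHG}_I(H)(n)=\chi_I(H)(n)$, as required.

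The main obstacle is the inductive formula for $P_i$: one must carefully track how the extra empty edge created by an $SHG$ contraction propagates through the remaining factors of the iterated coproduct. Once this formula is in hand, the insensitivity of the discreteness test to both edge multiplicities (in the HG formulation) and empty edges (in the SHG formulation) makes the equality of summands immediate.
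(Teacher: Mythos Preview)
Your argument is correct, and it takes a genuinely different route from the paper.

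The paper's proof is more structural: it introduces the simplification map $s:HG\to SHG$ that forgets edge multiplicities, observes that $s$ is a Hopf monoid morphism sending discrete hypergraphs to discrete simple hypergraphs (and only those), and then invokes Proposition~\ref{invmorph} (more precisely, the remark following it) to get $\chi^{SHG}_I\circ s_I=\chi^{HG}_I$. Since $s$ is the identity on simple hypergraphs, the claim follows in one line. In particular, the paper never unfolds the iterated $SHG$-coproduct.

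Your approach, by contrast, is computational: you identify the $i$-th tensor factor of $\Delta^{SHG}_{S_1,\dots,S_n}(H)$ explicitly and compare it to the corresponding factor in $HG$. The payoff is that your argument is self-contained (no appeal to Proposition~\ref{invmorph}) and makes transparent \emph{why} the two invariants agree: the discreteness character is blind to exactly the two discrepancies between the coproducts---repeated edges on the $HG$ side and the spurious empty edge on the $SHG$ side. The cost is the bookkeeping you flag in your last paragraph; note, however, that this bookkeeping simplifies if you use the closed form $P_i=\{e\cap S_i:e\in H,\ m_S(e)\le i\}$ (as a set), which follows directly from the description $(H_{/S_1\sqcup\cdots\sqcup S_{i-1}})_{|S_i}$ via the alternative formula for $H_{/S}$, so no induction on $n$ is really needed.
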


\begin{proof}
Let $s: HG\rightarrow SHG$ be the Hopf monoid morphism which removes any repetition of edges and let $H$ be a simple hypergraph over $I$. Considering $SHG$ as a sub-species of $HG$ and $s$ as a morphism of vector species we have: $\chi^{HG}_I(H) = \chi^{HG}_I(s(H))$. Then using the fact that $s$ is a Hopf monoid morphism stable on the sub-species of discrete elements we have that: $\chi^{HG}_I(s(H))=\chi^{SHG}_I(H)$. This concludes the proof.
\end{proof}

\subsection{Graphs}
\label{graphs}
The result of this subsection has already been given in Section 18 of \cite{AA}, but we give it here as a consequence of our result in the previous section. 

A \textit{graph} can be seen as a hypergraph whose edges are all of cardinality 2. As for the vector species of simple hypergraphs, the vector species $G$ of graphs is not stable by the contraction defined on hypergraphs, but it still admits a Hopf monoid structure. The product and co-product are given by, for $I=S\sqcup T$:
\begin{align*}
\mu_{S,T}: G[S]\otimes G[T] &\rightarrow G[I] & \Delta_{S,T}: G[I] &\rightarrow G[S]\otimes G[T] \\
g_1\otimes g_2 &\mapsto g_1\sqcup g_2 & g &\mapsto g_{|S}\otimes g_{/S},
\end{align*}
where $g_{|S}$ is the sub-graph of $g$ induced by $S$ and $g_{/S} = g_{|T}$. A discrete graph is then a graph with no edges.

A \textit{proper coloring} of a graph is a coloring such that no edge has its two vertices of the same color. The chromatic polynomial of a graph is the polynomial $T$ such that $T(n)$ is the number of proper colorings with $n$ colors.

\begin{corollary}[Proposition 18.1 in \cite{AA}]
The basic invariant of $G$ is the chromatic polynomial.
\end{corollary}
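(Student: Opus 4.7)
The plan is to compute $\chi^G_I(g)(n)$ directly from Definition \ref{definv}, by working out the iterated graph co-product and then applying the basic character.

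For a graph $g$ on $I = S \sqcup T$, the co-product sends $g$ to $g_{|S} \otimes g_{|T}$; any edge with endpoints in different parts is discarded entirely (this is the content of $g_{/S} = g_{|T}$). By a routine induction on $n$ using co-associativity, for any decomposition $(S_1, \ldots, S_n) \vdash I$ one obtains $\Delta^G_{S_1, \ldots, S_n}(g) = g_{|S_1} \otimes \cdots \otimes g_{|S_n}$.

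Since the basic character $\zeta$ assigns $1$ to discrete graphs (no edges) and $0$ otherwise, the summand $\zeta_{S_1} \otimes \cdots \otimes \zeta_{S_n} \circ \Delta^G_{S_1, \ldots, S_n}(g)$ equals $1$ exactly when each induced subgraph $g_{|S_i}$ has no edges, i.e.\ when no edge of $g$ has both endpoints in the same block. Reading the decomposition as a coloring of $I$ by $[n]$, this is precisely the proper coloring condition, so summing yields
\begin{displaymath}
\chi^G_I(g)(n) = \#\{\text{proper colorings of } g \text{ with } n \text{ colors}\},
\end{displaymath}
which is the chromatic polynomial.

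I do not expect any real obstacle here. The only subtlety is that $G$ is not formally a sub-monoid of $HG$ (cross-edges get contracted to singletons in $HG$ but vanish in $G$), so one cannot simply invoke Proposition \ref{shg} and Theorem \ref{chi}. The direct computation above circumvents this. As a consistency check, one may still compare with Theorem \ref{chi} applied to $g$ viewed as a hypergraph: since every edge has size $2$, the condition ``only one maximal vertex'' is again the proper coloring condition, and the agreement $\chi^G_I(g) = \chi^{HG}_I(g)$ holds because the singletons produced by the hypergraph co-product are still discrete, hence invisible to $\zeta^{HG}$.
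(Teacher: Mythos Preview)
Your direct computation is correct. The key observation that the graph co-product satisfies $g_{/S}=g_{|T}$, so that the iterated co-product along any decomposition $(S_1,\dots,S_n)$ is simply $g_{|S_1}\otimes\cdots\otimes g_{|S_n}$, makes the identification with proper colorings immediate once the basic character is applied.

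The paper takes a different route: rather than computing from Definition~\ref{definv}, it passes through the intermediate Hopf monoid $HG_{\leq 2}$ of hypergraphs with edges of cardinality at most $2$, which is a genuine sub-monoid of $HG$, and then uses the Hopf monoid morphism $s:HG_{\leq 2}\to G$ that deletes size-$1$ edges. Arguing as in Proposition~\ref{shg} (via Proposition~\ref{invmorph}) yields $\chi^G_I(g)=\chi_I(g)$, and Theorem~\ref{chi} then gives the ``unique maximal vertex'' description, which for $2$-element edges is exactly the proper-coloring condition. Your approach is more elementary and self-contained---it does not invoke Theorem~\ref{chi} at all---while the paper's approach serves its broader narrative of deducing invariants on sub-structures uniformly from the hypergraph case. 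Your final paragraph anticipates precisely this comparison: the morphism $s$ is the formal device that absorbs the singleton edges you note are ``invisible to $\zeta^{HG}$''.
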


\begin{proof}
Let $HG_{\leq 2}$ be the Hopf monoid of hypergraphs with edges of cardinality at most 2 and let $s:HG_{\leq 2}\rightarrow G$ be the Hopf monoid morphism which removes edges of cardinality 1. Since $HG_{\leq 2}$ is a Hopf sub-monoid of $HG$ we have that $\chi^{HG_{\leq 2}}$ is the restriction of $\chi$ to $HG_{\leq 2}$. Using the same reasoning than in the proof of Proposition \ref{shg}, with $G$ and $HG_{\leq 2}$ instead of $SHG$ and $HG$, we get that $\chi^{G}$ is the restriction of $\chi$ to $G$.
Furthermore, for $g$ a graph and $S$ a coloring of $g$, we have the equivalence between ``each edge has a unique maximal vertex'' and ``$S$ is a proper coloring''. The result follows.
\end{proof}

In particular, by evaluating $\chi$ on negative integers for a graph, we recover the classical reciprocity theorem of Stanley \cite{stan}.

\subsection{Simplicial complexes}
\label{sc}
In \cite{sc} Benedetti, Hallam, and Machacek constructed a combinatorial Hopf algebra of simplicial complexes and in particular they obtained results which generalise those given in this subsection. 

An \textit{abstract simplicial complex}, or simplicial complex, on $I$ is a collection $C$ of subsets of $I$, called \textit{faces}, such that any subset of a face is a face i.e $J \in C$ and $K \subset J$ implies $K \in C$. By Proposition 21.1 of \cite{AA}, the vector species $SC$ of simplicial complexes is a sub-monoid of the Hopf monoid of simple hypergraphs.

The \textit{1-skeleton} of a simplicial complex is the graph formed by its faces of cardinality 2.

\begin{corollary}
Let $I$ be a set, $C\in SC[I]$ and $g$ its 1-skeleton. Then $\chi^{SC}_I(C)$ is the chromatic polynomial of $g$.
\end{corollary}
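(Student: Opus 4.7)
The plan is to reduce in two stages to the hypergraph basic invariant $\chi$, apply Theorem~\ref{chi}, and then exploit the downward-closed axiom of simplicial complexes to match the resulting count with proper colorings of $g$.

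First, since $SC$ is a sub-monoid of $SHG$, the remark following Proposition~\ref{invmorph} yields $\chi^{SC}_I(C) = \chi^{SHG}_I(C)$; and Proposition~\ref{shg} identifies $\chi^{SHG}_I(C)$ with $\chi_I(C)$, where $C$ is viewed as a hypergraph. Then Theorem~\ref{chi} tells us that $\chi_I(C)(n)$ is the number of colorings $c : I \to [n]$ such that every face of $C$ has a unique vertex of maximal color.

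The third and substantive step is to identify this set of colorings with the proper colorings of $g$. The forward direction is immediate: a $2$-element face of $C$ is precisely an edge of $g$, and the unique-maximum condition on such a face forces its two endpoints to have distinct colors. The reverse direction is where the simplicial-complex axiom enters: if $c$ is proper on $g$ and $F$ is any face of $C$, then for every pair $u,v \in F$ the set $\{u,v\} \subseteq F$ is itself a face of $C$ by downward-closedness, hence an edge of $g$, whence $c(u) \neq c(v)$. Thus the colors on $F$ are pairwise distinct and $F$ admits a unique maximum. Faces of cardinality at most one impose no constraint.

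The main (and essentially only) obstacle is this third step, and the only subtlety is invoking downward-closedness to reduce the unique-maximum condition on arbitrary faces to the same condition on $2$-element faces. Once that is done, the resulting count is by definition the chromatic polynomial of $g$ evaluated at $n$, which concludes the argument.
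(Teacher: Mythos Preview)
Your proof is correct and follows essentially the same approach as the paper: both reduce to the hypergraph invariant $\chi$ via Theorem~\ref{chi} and then argue that the unique-maximum condition on faces is equivalent to properness on the 1-skeleton, the key observation being that every face of $C$ is a clique in $g$ by downward-closedness. You are simply more explicit than the paper about the preliminary reductions through $SHG$ and Proposition~\ref{shg}.
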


\begin{proof}
It is clear that any coloring of $C$ such that each edge has a unique maximal vertex induces a proper coloring of $g$. On the other hand if $J$ is a face of $C$ then it is also a clique of $g$, and so any proper coloring of $g$ must color all the vertices in $J$ in different colors. In particular there must be a unique maximal vertex in $J$.
\end{proof}

\subsection{Building sets}
Building sets and graphical building sets have been studied in a Hopf algebraic context by Gruji\'c in \cite{gr} where he gave similar results to the ones obtained in this subsection and the following one.

Building sets were independently introduced by De Concini and Procesi in \cite{DCP} and by Schmitt in \cite{Schmi}. A \textit{building set} on $I$ is collection $B$ of subsets of $I$, called \textit{connected sets}, such that if $J,K\in B$ and $J\cap K\not = \emptyset$ then $J\cup K\in B$ and for all $i\in I$, $\{i\}\in B$. By Proposition 22.3 of \cite{AA} the vector species $BS$ of building sets is a sub-monoid of the Hopf monoid of simple hypergraphs.

The maximal sets of a building set are called \textit{connected components}.


\begin{definition}
Let $B$ be a building set on $I$ with only one connected component. We define recursively a set of rooted trees which we call \textit{skeletons} of $B$.
\begin{itemize}
\item If $I=\{r\}$ is a singleton and $B=\{\{r\}\}$ the sole skeleton of $B$ is the rooted tree reduced to its root $r$.
\item If $I$ is not a singleton let $r$ be any element of $I$ and let $I_1,\dots,I_k$ be the maximal connected sets of $B$ not containing $r$. Then for each $i$ we associate to $I_i$ the building set $B_i=\{J\,|\,J\in B\text{ and }J\subseteq I_i\}$ on $I_i$. For each of these building sets, choose a skeleton $s_i$ and denote $r_i$ its root. Then the tree $s=\{(r,r_1),\dots,(r,r_k)\}\cup s_1\cup\dots\cup s_k$ with root $r$ is a skeleton of $B$.
\end{itemize}

Let $B$ a building set. Its \textit{skeletons} are the disjoint unions of skeletons of its connected component.
\end{definition}


\begin{remark}
The skeletons are exactly the $B$-$forests$ where all the vertices are singletons as defined in Definition 22.6 of \cite{AA}.
\end{remark}

A rooted forest can be seen as a forest with an orientation which sends each edge on the parent vertex. Hence, one can define compatible and strictly compatible colorings of a rooted forest. Moreover, these notions correspond to the notions of \textit{natural-$T$-partition} and \textit{strict-$T$-partition} of \cite{gr}.

\begin{corollary}
\label{corbuild}
Let $I$ be a set and $B\in BS[I]$. Then $\chi^{BS}_I(B)(n)$ is the number of strictly compatible pairs of skeletons and colorings with $[n]$ and $\chi^{BS}_I(B)(-n)$ is the number of compatible pairs of skeletons and colorings with $[n]$. In particular, $\chi^{BS}_I(B)(-1)$ is the number of skeletons of $B$.
\end{corollary}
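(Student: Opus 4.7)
The plan is to apply Theorems \ref{chi} and \ref{chi-n} to $B$ viewed as a simple hypergraph, and then reinterpret acyclic orientations of $B$ as skeletons. The overall argument parallels the proofs of Proposition \ref{shg} and the graph and simplicial-complex corollaries, but the bijection between $\mathcal{A}_B$ and skeletons is the new ingredient.

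First I would observe that, since $BS$ is a sub-monoid of $SHG$ and the discrete elements of $BS[I]$ (building sets whose only connected sets are singletons) agree with those simple hypergraphs over $I$ whose edges all have cardinality at most one and that happen to be building sets, the same argument as in Proposition \ref{shg} shows that $\chi^{BS}$ is the restriction of $\chi$ to $BS$. Theorem \ref{chi} then expresses $\chi^{BS}_I(B)(n)$ as the number of strictly compatible pairs $(f,S)$ with $f\in\mathcal{A}_B$ and $S$ a coloring of $I$ with $[n]$, while Theorem \ref{chi-n} gives the analogous count for $(-1)^{|I|}\chi^{BS}_I(B)(-n)$ using plain compatibility.

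The heart of the proof is a bijection $\Phi:\mathcal{A}_B\to\{\text{skeletons of }B\}$ under which the two notions of (strict) compatibility coincide. Given a skeleton $T$ and a connected set $J\in B$, the recursive definition of skeleton produces a unique vertex $r_T(J)\in J$ which is an ancestor in $T$ of every other vertex of $J$; set $\Phi^{-1}(T)(J)=r_T(J)$. Conversely, given $f\in\mathcal{A}_B$, build a skeleton by choosing $f(C)$ as the root of each connected component $C$ and recursing on the maximal connected proper subsets of $C$ not containing $f(C)$ with the restricted building sets; acyclicity of $f$ together with the building-set axiom ensures that the recursion is consistent and yields a tree. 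With this bijection in hand, one checks that $(f,S)$ is strictly compatible as a hypergraph pair iff $(\Phi(f),S)$ is strictly compatible as a rooted-forest pair: each tree edge $(v,w)$ has the form $(f(K),f(J))$ for a maximal connected $K\subsetneq J$ with $f(J)\notin K$, so one direction is immediate; the converse follows by induction on the nesting depth of $J$, since any $v\in J\setminus\{f(J)\}$ either equals $f(K)$ for such a $K$ or lies in $K\setminus\{f(K)\}$, where $S(v)<S(f(K))<S(f(J))$ by induction. The identical argument with $\leq$ replacing $<$ handles the non-strict case, and setting $n=1$ in the second count yields the final assertion since every coloring with a single color is compatible with every orientation.

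The main obstacle is the well-definedness of $\Phi^{-1}$: one must verify that an acyclic orientation of $B$ really produces a consistent recursive tree rather than an inconsistent assignment, which requires a careful application of the building-set closure axiom (intersecting connected sets have connected union) at every level of the recursion, combined with the exclusion of directed cycles among the hypergraph edges.
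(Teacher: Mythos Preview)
Your proposal is correct and follows essentially the same route as the paper: reduce to $\chi$ via the sub-monoid inclusion $BS\subset SHG$, invoke Theorems~\ref{chi} and~\ref{chi-n}, and establish a compatibility-preserving bijection between $\mathcal{A}_B$ and the skeletons of $B$ by recursion on the connected components, using that every connected set containing the chosen root must be sent to that root by any acyclic orientation. Two small remarks: in your final paragraph you write ``well-definedness of $\Phi^{-1}$'' but then describe the obstacle for $\Phi$ (orientation $\to$ tree); and your explicit induction on the size of $J$ for the converse direction of compatibility is a welcome sharpening of the paper's rather terse ``the root of the skeleton is the image of the connected component along with the induction hypothesis.''
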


\begin{proof}
Since $BS$ is a sub-monoid of $SHG$ we know that $\chi^{BS}$ is the restriction of $\chi$ to $BS$. Hence, we only need to show that there exists a bijection $b$ which preserves compatibility with colorings between the acyclic orientations of $B$ seen as a hypergraph and its skeletons. Furthermore, since the acyclic orientations of a disjoint union of hypergraphs are the disjoint unions of their acyclic orientations and the skeletons of a disjoint union of building sets are the disjoint unions of their skeletons, we only have to show this bijection for a building set with one connected component.

We will do this by induction on the size of $I$. If $I=\{r\}$ is a singleton and $B=\{\{r\}\}$, then the unique acyclic orientation of $B$ is $f:\{r\}\mapsto r$ and the unique skeleton of $B$ is the rooted tree with only vertex $r$. These two elements are both compatible with all the colorings hence the preserving bijection $b: f \mapsto \{r\}$.

If $I$ is not a singleton, let $K$ be the connected component of $B$.
Let $f$ be an acyclic orientation of $B$ and $r=f(K)$. Then necessarily all the connected sets of $B$ containing $r$ are also sent on $r$ by $f$ because otherwise we would have a cycle (since these connected sets are contained in $K$ by definition). Let now $I_1,\dots I_k$ be the maximum connected sets not containing $r$ and $B_1,\dots,B_k$ their associated building sets. Then $f_{|I_i}$ is an acyclic orientation of $B_i$ and $s_i = b(f_{|I_i})$ is a skeleton of $B_i$ for $1\leq i\leq k$. Let $r_i$ be the root of $s_i$ for $1\leq i\leq k$. Then, by definition of a skeleton, the tree $b(f)=\{(r,r_1),\dots,(r,r_k)\}\cup s_1\cup\dots\cup s_k$ rooted in $r$ is a skeleton of $B$.

Let now $s$ be a skeleton of $B$ and $r$ be its root. Let $B_0$ be the set of connected sets of $B$ containing $r$ and $I_0$ be the set of vertices which are in a connected set containing $r$ which is not the connected component. Let $I_1,\dots, I_k$ be the maximal connected set of $B$ not containing $r$ and $B_1,\dots,B_k$ be their associated building sets. Note that since $B$ is a building set, one has $I_i\cap I_j = \emptyset$ for $1\leq i\not = j\leq k$ (or else $I_i\subsetneq I_i\cup I_j\in B$ is not maximal).

By definition of a skeleton, $s$ has exactly $k$ sub-trees $s_1,\dots, s_k$ such that $s_i$ is a skeleton of $B_i$ for $1\leq i\leq k$.
Define $f_i=b^{-1}(s_i)$ for $1\leq i\leq k$ and $f_0$ the acyclic orientation of $B_0$ which sends every connected set of $B_0$ on $r$. Let $f$ be the orientation of $B$ which sends a connected set $K\in B_i$ on $f_i(K)$ for $i$ such that $K\in B_i$. This orientation is everywhere well defined because $B_0,B_1,\dots,B_k$ is, by definition, a partition of $B$. Suppose $f$ is not acyclic and let $e_1,\dots e_l$ be a directed cycle. Then necessarily the connected sets $e_1,\dots e_l$ can not all be in the same $B_i$ because $f_{|I_i} = f_i$ is acyclic. Without loss of generality, let $e_1\in B_{i_1}$ and $e_2\in B_{i_2}$ with $i_1\not = i_2$.
\begin{itemize}
\item Suppose $i_1 \not = 0$ and $i_2 \not = 0$. Then $f(e_1)\in e_1\cap e_2\subseteq I_{i_1}\cap I_{i_2}$, which is not possible by maximality of $I_{i_1}$.
\item Suppose $i_1 = 0$. Then $r = f(e_1) \in e_1\cap e_2\subseteq I_{i_2}$, which is not possible by definition of $I_{i_2}$.
\item Suppose $i_2 = 0$. Then by the previous point $e_3$ must also be in $B_0$. An iteration then implies that all the $e_i$ must be in $B_0$. This contradicts the hypothesis $i_1\not = i_2$.
\end{itemize}
Hence $f$ is an acyclic orientation.

The fact that in the two preceding constructions the root of the skeleton is the image of the connected component along with the induction hypothesis enable us to conclude that $b$ is bijection that preserves compatibility with colorings.
\end{proof}

\subsection{Simple graphs, ripping and sewing}
A \textit{simple} graph is a simple hypergraph that is also a graph. The vector species $W$ of simple graphs admits a Hopf monoid structure, the product and co-product are given by, for $I=S\sqcup T$:
\begin{align*}
\mu_{S,T}: W[S]\otimes W[T] &\rightarrow W[I] & \Delta_{S,T}: W[I] &\rightarrow W[S]\otimes W[T] \\
w_1\otimes w_2 &\mapsto w_1\sqcup w_2 & w &\mapsto w_{|S}\otimes w_{/S}
\end{align*}
where $w_{|S}$ is the sub-graph of $w$ induced by $S$ and $w_{/S}$ is the simple graph on $T$ with an edge between $u$ and $v$ if there is a path from $u$ to $v$ in which all the vertices which are not ends are in $S$. These two operations are respectively called \textit{ripping out $T$} and \textit{sewing through $S$}. A discrete simple graph is then a simple graph with no edges.

\begin{definition}[Definition 23.1 in \cite{AA}]
Let be $w\in W[I]$. A \textit{tube} is a subset $J\subseteq I$ such that $w_{|J}$ is connected. The set of tubes of $w$ is a building set called \textit{graphical building set of $w$} and which we denote $\text{tubes}(w)$.
\end{definition}

By Proposition 23.3 of \cite{AA} we know that $w\mapsto \text{tubes}(w)$ is a Hopf monoid morphism between $W$ and $BS$.

Given a rooted tree we call its \textit{direct sub-trees} the sub-trees with roots the children of the root.

\begin{definition}
Let be $w\in W[I]$ a connected simple graph. We define the set of \textit{partitioning trees of $w$} inductively by the following:
\begin{itemize}
\item if $I=\{v\}$, then the unique partitioning of $w$ is the graph with $\{v\}$ as only vertex,
\item else choose $v\in I$ and a partitioning tree for each connected component of $w_{|I\setminus \{v\}}$. The tree with root $v$ and direct sub-trees these partitioning trees is then a partitioning tree of $w$.
\end{itemize}
If $w$ is not connected anymore, a \textit{partitioning forest of $w$} is the disjoint union of partitioning trees of each connected component of $w$.
\end{definition}

\begin{corollary}
Let $I$ be a set and $w\in W[I]$. Then $\chi^{W}_I(w)(n)$ is the number of colorings of $w$ with $[n]$ such that every path with ends of the same color has a vertex of color strictly greater than the colors of the ends. It is also the number of strictly compatible pairs of partitioning forests and colorings with $[n]$. $\chi^{W}_I(w)(-n)$ is the number of compatible pairs of partitioning forests and colorings with $[n]$. In particular, $\chi^{W}_I(w)(-1)$ is the number of partitioning trees of $w$.
\end{corollary}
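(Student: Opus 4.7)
The plan is to reduce to Corollary \ref{corbuild} via the Hopf monoid morphism $\text{tubes}: W \to BS$, then translate skeletons of $\text{tubes}(w)$ into partitioning forests of $w$. Since a discrete simple graph (no edges) has $\text{tubes}(w) = \{\{v\} : v \in I\}$, which is discrete in $BS$, the morphism $\text{tubes}$ preserves discrete elements. Proposition \ref{invmorph} then gives $\chi^W_I(w) = \chi^{BS}_I(\text{tubes}(w))$, so Corollary \ref{corbuild} expresses both $\chi^W_I(w)(n)$ and $\chi^W_I(w)(-n)$ as counts of (strictly) compatible pairs (skeleton of $\text{tubes}(w)$, coloring of $I$ with $[n]$).

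Next, I would construct a bijection between skeletons of $\text{tubes}(w)$ and partitioning forests of $w$ by matching their recursive definitions. For a connected $w$ with a chosen root $r$, the maximal tubes of $\text{tubes}(w)$ not containing $r$ are precisely the vertex sets of the connected components of $w_{|I \setminus \{r\}}$, and $\{K \in \text{tubes}(w) : K \subseteq I_i\} = \text{tubes}(w_{|I_i})$. So the two recursions coincide step by step; the general forest case follows by taking disjoint unions over the connected components of $w$.

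The compatibility transfer rests on the following lemma: for every tube $K$ of $w$, the lowest common ancestor $x$ of $K$ in the partitioning forest lies in $K$, and $K \subseteq T_x$, where $T_x$ is the vertex set of the subtree rooted at $x$. Indeed, $K \subseteq T_x$ by definition of LCA, and if $x \notin K$ then $K \subseteq T_x \setminus \{x\}$ would split nontrivially across at least two child subtrees $T_{c_i}$; but these are distinct connected components of $w_{|T_x \setminus \{x\}}$ by construction of the partitioning tree, so $K$ could not be connected in $w$. Granted this, strict compatibility of a coloring $c$ with the corresponding skeleton translates to: every vertex $x$ is the unique $c$-maximum on $T_x$, and non-strict compatibility likewise. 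The equivalence with the ``path'' characterization is immediate from the same lemma, since the vertex set of any path is a tube and conversely any tube carries an internal path between any two of its vertices: a coloring has a unique $c$-maximum on every tube iff every $w$-path with equal-colored endpoints contains a vertex of strictly greater color.

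The reciprocity statement and the $n=1$ case then follow: with only one color available the compatibility constraint is automatic, so $\chi^W_I(w)(-1)$ counts partitioning forests directly. The main obstacle is the LCA lemma above; once it is in place, the rest is an unpacking of the parallel recursive definitions of skeletons and partitioning trees.
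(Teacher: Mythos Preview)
Your approach is essentially the paper's: reduce to $\chi^{BS}$ via the morphism $\text{tubes}$, invoke Corollary \ref{corbuild}, identify skeletons of $\text{tubes}(w)$ with partitioning forests by matching the two recursions, and then argue the tube/path equivalence. One small link you leave implicit is that $\chi^{BS}$ is the restriction of $\chi$ to $BS$ (Proposition \ref{shg}), so that $\chi^W_I(w)(n)=\chi_I(\text{tubes}(w))(n)$ already counts colorings in which every tube has a unique maximal vertex (Theorem \ref{chi}); this is what makes your equivalence ``unique max on every tube $\Leftrightarrow$ path condition'' actually compute $\chi^W_I(w)(n)$. Your LCA lemma is correct and pleasant, but it is not needed for the compatibility transfer: once skeletons and partitioning forests coincide \emph{as rooted forests}, the (strict) compatibility notion of Corollary \ref{corbuild} is literally the same on both sides, so nothing further is required there.
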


\begin{proof}
Since $w\mapsto \text{tubes}(w)$ is a Hopf monoid morphism, we know from what follows Proposition \ref{invmorph} that $\chi^W_I(w) = \chi^{BS}_I(\text{tubes}(w))$. Hence from Corollary \ref{corbuild}, $\chi^W_I(w)(n)$ is the number of strictly compatible pairs of skeletons of $\text{tubes}(w)$ and colorings with $[n]$ and $\chi^W_I(w)(-n)$ is the number of compatible pairs of skeletons of $\text{tubes}(w)$ and colorings with $[n]$.

Furthermore since $\chi^{BS}$ is the restriction of $\chi$ to $BS$ we have that $\chi^W_I(w) = \chi_I(\text{tubes}(w))$ so $\chi^{W}_I(w)(n)$ is also the number of colorings of $\text{tubes}(w)$ such that all edges have a unique maximal vertex.

Given these two facts we now only need to show the two following points:
\begin{itemize}
\item A coloring $I\rightarrow [n]$ is a coloring of $\text{tubes}(w)$ such that all edges have a unique maximal vertex if and only if it is a coloring of $w$ such that every path with ends of the same color has a vertex of color strictly greater than the colors of the ends.
\item The partitioning forests of $w$ are exactly the skeletons of $\text{tubes}(w)$.
\end{itemize}

We begin by the first assertion. Let $S$ be a coloring of $\text{tubes}(w)$ of interest and $v_1,\dots,v_k$ a path of $w$ such that $S(v_1) = S(v_k)$. Then $w_{|\{v_1,\dots, v_k\}}$ is connected and so $\{v_1,\dots, v_k\}$ is an edge of $\text{tubes}(w)$. Since $v_1$ and $v_k$ are of the same color, their color can not be the maximal color. Hence there exists an $i$ such that $S(v_i) > S(v_1) = S(v_k)$ and $S$ is a coloring of $w$ of interest. Let now $S$ be a coloring of $w$ of interest and $e$ an edge of $\text{tubes}(w)$ with two vertices $v_1$ and $v_2$ of the same color. Then, since $w_{|e}$ is connected by definition, there exists a path in $e$ from $v_1$ to $v_2$ and hence $v_3$ such that $S(v_3) > S(v_1) = S(v_2)$. Thus there can only be one vertex of maximal color in $e$ and $S$ is a coloring of $\text{tubes}(w)$ of interest.

To show that partitioning forests and skeletons are the same objects, just remark that given a vertex $v\in J$, where $w_{|J}$ is a connected component of $w$, the connected components of $w_{|J\setminus\{v\}}$ are exactly the maximal connected sets of $\text{tubes}(w)$ included in $J$ but not containing $v$.
\end{proof}

\subsection{Set partitions}
Proposition 24.4 of \cite{AA} states that there exists an isomorphism between the Hopf monoid of permutahedra and the Hopf monoid of set partitions. Furthermore, Propositions 17.3 and 17.4 of \cite{AA} give a combinatorial interpretation of the basic invariant of the Hopf monoid of generalized permutahedra $GP$. The Hopf monoid of permutahedra being a sub-monoid of a quotient of $GP$, it should be possible to deduce the result presented in this subsection from the aforementioned propositions.

A \textit{partition of $I$} is a subset of $\mathcal{P}(I)\setminus\{\emptyset\}$ such that all elements, which are called \textit{parts}, are disjoints and their union equals $I$. The vector species $\Pi$ of partitions admits a Hopf monoid structure, the product and co-product are given by, for $I=S\sqcup T$:
\begin{align*}
\mu_{S,T}: \Pi[S]\otimes \Pi[T] &\rightarrow \Pi[I] & \Delta_{S,T}: \Pi[I] &\rightarrow \Pi[S]\otimes \Pi[T] \\
\pi_1\otimes \pi_2 &\mapsto \pi_1\sqcup \pi_2 & \pi &\mapsto \pi_{|S}\otimes \pi_{|T},
\end{align*}
where for $\pi=\{\pi^1,\dots,\pi^l\}$, $\pi_{|S}$ is the partition of $S$ obtained by taking the intersection with $S$ of each part $\pi^i$ and forgetting the empty parts. A discrete partition is then a partition where all parts are singletons.

A \textit{cliquey graph} is a disjoint union of cliques. By Proposition 24.2 of \cite{AA} we know that $\pi\mapsto c(\pi)$ is a Hopf monoid from $\Pi$ to $W$, where $c(\pi)$ is the cliquey graph with a clique on each part of $\pi$.

\begin{corollary}
Let $I$ be a set and $\pi = \{\pi^1,\dots,\pi^l\}\in\Pi[I]$. Then $\chi^{\Pi}_I(\pi)(n) = \Pi_{i=1}^lp_i!\binom{n}{p_i}$ where $p_i = |\pi^i|$.
\end{corollary}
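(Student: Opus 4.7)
The plan is to pull the computation back through the Hopf monoid morphism $c:\Pi\to W$ mentioned just before the statement, reducing the problem to the known interpretation of $\chi^W$ proved in the previous subsection. Since $c$ preserves discrete elements (the discrete partitions, whose parts are singletons, map to edgeless graphs), Proposition \ref{invmorph} together with the remark immediately following it yields
\begin{equation*}
\chi^{\Pi}_I(\pi)(n) = \chi^{W}_I(c(\pi))(n).
\end{equation*}
So it is enough to count colorings of the cliquey graph $c(\pi)$ with $[n]$ satisfying the condition from the previous subsection: every path whose endpoints share a color has an internal vertex of strictly larger color.

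The key observation is that on a cliquey graph this condition collapses to the usual proper-coloring condition. Indeed, if $u,v$ are two distinct vertices in the same part $\pi^i$, they are joined by a single edge in $c(\pi)$, which is a path of length $1$ with no internal vertex; the condition therefore forces $S(u)\neq S(v)$. Conversely, if $u$ and $v$ lie in different parts then they are in different connected components of $c(\pi)$, so no path between them exists and no constraint arises. Thus the valid colorings are exactly the maps $I\to[n]$ whose restriction to each part $\pi^i$ is injective.

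The counting is then immediate: for each part $\pi^i$ of size $p_i$ the number of injections $\pi^i\hookrightarrow[n]$ is $n(n-1)\cdots(n-p_i+1) = p_i!\binom{n}{p_i}$, and the choices made on distinct parts are independent. Multiplying gives
\begin{equation*}
\chi^{\Pi}_I(\pi)(n) = \prod_{i=1}^{l} p_i!\binom{n}{p_i}.
\end{equation*}
There is no real obstacle here — the only subtlety worth being careful about is the short length-$1$ path argument, which is what guarantees that the $\chi^W$-condition really does reduce to a proper coloring rather than to something weaker that might allow some monochromatic edges inside a clique.
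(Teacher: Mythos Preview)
Your proof is correct and follows essentially the same route as the paper: pull back along the morphism $c:\Pi\to W$, then observe that on a clique the path condition forces all colors distinct (via the length-$1$ path argument), so the count reduces to a product of falling factorials. The only cosmetic difference is that the paper invokes multiplicativity of $\chi^\Pi$ first to reduce to a single clique, whereas you apply the morphism first and then argue independence across connected components directly; the substance is identical.
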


\begin{proof}
Since $\chi^{\Pi}$ is multiplicative and $\pi\mapsto c(\pi)$ is a Hopf monoid morphism, we only need to show that $\chi^{W}_I(w)(n)=|I|!\binom{n}{|I|}$ where $w$ is the clique on $I$. A coloring $S$ of $w$ is such that every path with ends of the same color has a vertex of color strictly greater than the colors of the ends if and only if all vertices are of different colors (because for each pair $v_1,v_2$ of vertices $v_1,v_2$ is a path in $w$). Hence the number of such colorings is the number of injections from $I$ to $[n]$. This concludes the proof.
\end{proof}

\subsection{Paths}
\label{paths}
As for the previous subsection, Proposition 25.7 of \cite{AA} states that the Hopf monoid of sets of paths is isomorphic to the Hopf monoid of associahedra which is a sub-monoid of a quotient of $GP$. Hence, it should also be possible to deduce the result of this subsection from \cite{AA}.

A \textit{word} on $I$ is a total ordering of $I$. The \textit{paths} on $I$ are the words on $I$ quotiented by the relation $w_1\dots w_{|I|} \sim w_{|I|}\dots w_1$. A \textit{set of paths} $\alpha$ of $I$ is a partition $(I_1,\dots, I_l)$ of $I$ with a path $s_i$ on each part $I_i$ and we will write $\alpha = s_1|\dots|s_l$. The vector species $F$ of sets of paths admits a Hopf monoid structure, the product and co-product are given by, for $I=S\sqcup T$:
\begin{align*}
\mu_{S,T}: F[S]\otimes F[T] &\rightarrow F[I] & \Delta_{S,T}: F[I] &\rightarrow F[S]\otimes F[T] \\
\alpha_1\otimes \alpha_2 &\mapsto \alpha_1\sqcup \alpha_2 & \alpha &\mapsto \alpha_{|S}\otimes \alpha_{/S}
\end{align*}
where if $\alpha = s_1|\dots|s_l$, $\alpha_{|S}= s_1\cap S|\dots| s_l\cap S$ forgetting the empty parts and $\alpha_{/S}$ is the set of paths obtained by replacing each occurrence of an element of $S$ in $\alpha$ by the separation symbol $|$. A discrete set of paths is then a set of paths where all paths have only one element.

\begin{example}
For $I=\{a,b,c,d,e,f,g\}$ and $S=\{b,c,e\}$ and $T=\{a,d,f,g\}$, we have: $$\Delta_{S,T}(bfcg|aed) = bc|e\otimes f|g|a|d.$$
\end{example}

By Proposition 25.1 of \cite{AA} we know that $\alpha \mapsto l(\alpha)$ is a morphism of Hopf monoids from $F$ to $W^{cop}$; where $l(s_1|\dots|s_l)$ is the simple graph whose connected components are the paths induced by $s_1,\dots, s_l$.

\begin{example}
For $I=\{a,b,c,d,e,f,g\}$ and $\alpha=bfcg|aed$, $l(\alpha)$ is the following graph:
\begin{center}
\includegraphics[scale=1.5]{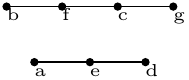}
\end{center}
\end{example}

\begin{corollary}
Let $I$ be a set and $\alpha$ be a path on $I$. Then $\chi^F_I(\alpha)(n)$ is the number of strictly compatible pairs of binary trees with $|I|$ vertices and colorings with $[n]$ and $\chi^F_I(\alpha)(-n)$ is the number of compatible pairs of binary trees with $|I|$ vertices and colorings with $[n]$. In particular $\chi^F_I(\alpha)(-1) = C_{|I|}$ where $C_n = \frac{1}{n+1}\binom{2n}{n}$ is the $n$-th Catalan number.
\end{corollary}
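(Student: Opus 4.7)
The plan is to reduce the statement to the simple graph case via the Hopf monoid morphism $l : F \rightarrow W^{cop}$ recalled above, and then identify partitioning trees of a path graph with binary trees so that the corollary on simple graphs applies directly.

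The first step, which I expect to be the subtlest, is to get around the fact that the codomain of $l$ is $W^{cop}$ and not $W$. I would establish the general fact that $\chi^M = \chi^{M^{cop}}$ for any Hopf monoid $M$: the iterated coproducts $\Delta^{cop}_{S_1,\dots,S_n}$ and $\Delta_{S_n,\dots,S_1}$ agree up to reversing the tensor factors, and the basic character is scalar-valued and multiplicative, so the product of character values appearing in Definition \ref{definv} is insensitive to this reversal; reindexing the sum by $(T_1,\dots,T_n)=(S_n,\dots,S_1)$ then yields the claimed equality. Since $l$ visibly preserves discrete elements (a set of paths in which every path has a single element is sent to the edgeless simple graph), Proposition \ref{invmorph} gives $\chi^F_I(\alpha) = \chi^{W^{cop}}_I(l(\alpha)) = \chi^W_I(l(\alpha))$.

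Next, because $\alpha$ is a single path, $l(\alpha)$ is a connected path graph on $I$, so partitioning forests of $l(\alpha)$ are exactly its partitioning trees. Applying the corollary of the previous subsection, $\chi^W_I(l(\alpha))(n)$ counts strictly compatible pairs of partitioning trees of $l(\alpha)$ and colorings with $[n]$, while $\chi^W_I(l(\alpha))(-n)$ counts compatible such pairs. To conclude, I need an orientation-preserving bijection between partitioning trees of the path $l(\alpha)$ and binary trees with $|I|$ vertices, so that (strict) compatibility on one side is transported to (strict) compatibility on the other.

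I would define this bijection by induction: writing the vertices as $v_1,v_2,\dots,v_n$ in path order, a partitioning tree selects a root $v_k$, after which the two connected components of the remaining graph are the sub-paths $v_1,\dots,v_{k-1}$ and $v_{k+1},\dots,v_n$, to which the bijection is applied recursively to produce the left and right subtrees of $v_k$ in the corresponding binary tree. The inverse is given by the in-order traversal of the binary tree, which recovers $v_1,v_2,\dots,v_n$. Both trees are oriented toward the root, so compatibility of colorings is preserved. The specialization $\chi^F_I(\alpha)(-1) = C_{|I|}$ follows at once from the fact that $\chi^W_I(w)(-1)$ enumerates the partitioning trees of $w$ and from the classical enumeration of binary trees on $n$ vertices by $C_n$.
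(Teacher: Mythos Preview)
Your proposal is correct and follows essentially the same route as the paper: reduce to $W$ via the morphism $l$ after observing $\chi^{W^{cop}}=\chi^W$, then identify partitioning trees of a path with binary trees. You supply more detail than the paper does on both steps---in particular your justification of $\chi^M=\chi^{M^{cop}}$ and your explicit recursive bijection (with in-order traversal as inverse) flesh out what the paper dispatches in a sentence each.
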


\begin{proof}
First remark that by definition, $\chi^{W^{cop}} = \chi^{W}$ and so $\chi^F_I(\alpha)(n) = \chi^W_I(l(\alpha))(n)$. Fix one of the two total orderings of $I$ induced by $\alpha$ so that we can consider the left and the right of a vertex $v$ of $l(\alpha)$. Then each vertex of $l(\alpha)$ is totally characterised by the number of vertices on its left (and on its right) and hence the partitioning trees of $l(\alpha)$ are exactly the binary trees with $|I|$ vertices.
\end{proof}

\bigskip
\bigskip
\noindent{\bf Concluding remarks}

Let us end this paper by presenting some perspectives for future work.
We plan to generalize the results of this paper to all characters on the Hopf monoid of hypergraphs. While Theorem \ref{chi} does generalize easily for characters with value in $\{0,1\}$, the conditions on the characters are slighlty more involved for Theorem \ref{chi-n} to hold.

Finally, an open question that appears interesting to us is to recover Theorem \ref{chi-n} using the antipode formula given in \cite{AA}. We refer the reader to \cite{HypB} where this has been done for a different Hopf structure on hypergraphs.
\bigskip

\noindent{\bf Acknowledgement}

The authors would like to thank Alexander Postnikov for pointing out to them (after the submission of this paper) a different approach to obtain the basic invariant for hypergraphs \cite{Pos1}\cite{Pos2}. The authors are also grateful to the anonymous referee for his useful comments on a preliminary version of this paper.

\bibliographystyle{plain}
\bibliography{redac}
\end{document}